\newtheorem{theorem}{Theorem}[section]
\newtheorem{lemma}[theorem]{Lemma}
\newtheorem{corollary}[theorem]{Corollary}
\theoremstyle{definition}
\newtheorem{examp}[theorem]{Example}
\theoremstyle{remark}
\numberwithin{equation}{section}
\newcommand{\QED}{\qed}
\newenvironment{axiom}{\begin{list}{$\bullet$}{\setlength{\labelsep}{.7cm}%
\setlength{\leftmargin}{2.5cm}\setlength{\rightmargin}{0cm}%
\setlength{\labelwidth}{1.8cm}\setlength{\itemsep}{0pt}}}{\end{list}}
\newcommand{\ax}[1]{\item[{\bf #1}\hfill]\index{#1}}
\newcommand{\diams}{\unskip\nobreak\hfil\penalty50%
\hskip1em\hbox{}\nobreak\hfil%
$\diamondsuit$\parfillskip=0pt\finalhyphendemerits=0}
\newcommand{\bfind}[1]{\index{#1}{\bf #1}}
\newcommand{\n}{\par\noindent}
\newcommand{\sn}{\par\smallskip\noindent}
\newcommand{\bn}{\par\bigskip\noindent}
\newcommand{\pars}{\par\smallskip}
\newcommand{\parb}{\par\bigskip}
\newcommand{\fvklit}[1]{[#1]}
\newcommand{\ovl}[1]{\overline{#1}}
\newcommand{\sep}{^{\rm sep}}
\newcommand{\Gal}{\mbox{\rm Gal}\,}
\newcommand{\cal}{\mathcal}
\newcommand{\N}{\mathbb N}
\begin{document}
\title{Approximation of elements in henselizations}
\author{Franz-Viktor Kuhlmann}
\address{Department of Mathematics and Statistics,
University of Saskatchewan,
106 Wiggins Road,
Saskatoon, Saskatchewan, Canada S7N 5E6}
\email{fvk@math.usask.ca}
\thanks{This work was partially supported by a Canadian NSERC grant and
was completed while I was a guest professor at the Silesian University
at Katowice, Poland. I wish to thank the faculty of mathematics of that
university for their hospitality.}
\date{12.\ 3.\ 2010}
\subjclass[2000]{Primary 12J20; Secondary 12J10}
\begin{abstract}\noindent
{\footnotesize\rm
For valued fields $K$ of rank higher than 1, we describe how elements in
the henselization $K^h$ of $K$ can be approximated from within $K$; our
result is a handy generalization of the well-known fact that in rank 1,
all of these elements lie in the completion of $K$. We apply the result
to show that if an element $z$ algebraic over $K$ can be approximated
from within $K$ in the same way as an element in $K^h$, then $K(z)$ is
not linearly disjoint from $K^h$ over $K$.}
\end{abstract}
\maketitle
%
%
%
\section{Introduction}                  
Complete valued fields of rank 1 are henselian, but for valuations $v$ of
arbitrary rank, this does not hold in general. However, there is a
connection between Hensel's Lemma and completions, but these completions
have to be taken for residue fields of suitable coarsenings of $v$. This
connection was worked out by Ribenboim [R] who used \bfind{distinguished
pseudo Cauchy sequences} to characterize the so called \bfind{stepwise
complete} fields; it had been shown by Krull that these fields are
henselian. We want to give a more precise description of this
connection.

Take any extension $(L|K,v)$ of valued fields, that is, an extension
$L|K$ of fields and a valuation $v$ on $L$. By $vL$ and $vK$ we denote
the value groups of $v$ on $L$ and on $K$, and by $Lv$ and $Kv$ the
residue fields of $v$ on $L$ and on $K$, respectively. Similarly, $vz$
and $zv$ denote the value and the residue of an element $z$ under $v$.
For $z\in L$, we define
\[
v(z-L)\>:=\> \{v(z-c)\mid c\in K\}\>\subseteq\> vL\cup\{\infty\}\;.
\]
We call $z$ \bfind{weakly distinguished over $K$} if there is a
non-trivial convex subgroup $\Delta$ of $vK$ and some $\alpha\in vK$
such that the coset $\alpha+\Delta$ is cofinal in $v(z-L)$, that is,
$\alpha+\Delta\subseteq v(z-L)$ and for all $\beta\in v(z-L)$ there is
$\gamma\in \alpha+\Delta$ such that $\beta\leq\gamma$. If this holds
with $\alpha=0$, that is, if some non-trivial convex subgroup of $vK$ is
cofinal in $v(z-L)$, then we call $z$ \bfind{distinguished over
$K$}. This name is chosen since distinguished elements induce
distinguished pseudo Cauchy sequences in the sense of Ribenboim
[R], p.\ 105.

The extension $(L|K,v)$ is \bfind{immediate} if the canonical embeddings
of $vK$ in $vL$ and of $Kv$ in $Lv$ are onto.

Now take an arbitrary valued field $(K,v)$ and extend its valuation $v$
to its algebraic closure $\tilde{K}$. Then $\tilde{K}$ contains a unique
henselization $K^h$ with respect to this extension. We will prove:

\begin{theorem}                             \label{MT1}
Every element $a\in K^h\setminus K$ is weakly distinguished over $K$.
In particular, the henselization is an immediate extension of $(K,v)$.
\end{theorem}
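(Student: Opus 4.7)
My approach would combine Newton iteration from within $K$ with structural results about pseudo-Cauchy sequences, proceeding in three phases.

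In the first phase, let $f \in K[X]$ be the minimal polynomial of $a$ over $K$, of degree $n \geq 2$. Because $a \in K^h$, I would establish the existence of $c_0 \in K$ satisfying Hensel's condition $v(f(c_0)) > 2 v(f'(c_0))$, with $a$ the unique Hensel root near $c_0$. That such a $c_0$ can be chosen already in $K$ (rather than only in some intermediate Hensel extension) should follow by induction on the Hensel-construction depth of $a$ in the ascending union $K = K^{h,0} \subseteq K^{h,1} \subseteq \cdots \subseteq K^h$, but this is a non-trivial point. Set $\delta := v(f'(a)) = v(f'(c_0))$ and $\eta_0 := v(a-c_0) - \delta > 0$ (positive by the Hensel condition).

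In the second phase, iterating Newton's formula $c_{k+1} := c_k - f(c_k)/f'(c_k) \in K$ yields a sequence $\{c_k\} \subseteq K$ with $v(a - c_k) \geq 2^k \eta_0 + \delta$ (with equality in the generic case). Let $\Delta$ be the smallest convex subgroup of $vK$ containing $\eta_0$; it is non-trivial since $\eta_0 > 0$. The values $v(a - c_k)$ then lie in the coset $\delta + \Delta$ and are cofinal in it from above, so downward closedness of $v(a-K) \subseteq vK$ yields $\delta + \Delta \subseteq v(a-K)$, which is one half of the weakly distinguished condition.

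The crucial third phase is the upper cofinality: showing that no $c \in K$ has $v(a-c) > \delta + \eta$ for every $\eta \in \Delta$. I would argue by contradiction. Such a $c$ would force $v(c - c_k) = v(a - c_k)$ for every $k$, making $c$ another pseudo-limit in $K$ of the pseudo-Cauchy sequence $\{c_k\}$. This pCS is of algebraic type with associated polynomial $f$, and by Kaplansky-style structure theory for algebraic-type pseudo-Cauchy sequences, any pseudo-limit must be a root of $f$. But $f$ is irreducible of degree $\geq 2$ over $K$ and has no root in $K$—contradicting $c \in K$.

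The hardest steps will be the first phase (flattening the iterated Hensel construction to produce $K$-initial data for the minimal polynomial) and the third phase (applying the pseudo-Cauchy structure theory precisely in this coset-cofinality setting). The ``in particular, immediate'' conclusion follows once weakly distinguished is established: a cofinal non-trivial coset $\alpha + \Delta$ in $v(a-K)$ has no maximum, so $v(a-K)$ has no maximum, which forces $(K(a)|K,v)$ to be immediate by a standard argument; and $K^h = \bigcup_{a \in K^h} K(a)$.
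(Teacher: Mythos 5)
Your proof collapses at Phase 1, and not for a repairable reason: the assertion that the minimal polynomial $f$ of $a$ over $K$ admits Newton initial data $c_0\in K$ (so that $a$ is the unique Hensel root of $f$ near some $c_0\in K$) is false in general, and no induction on construction depth can establish it. Example: let $K=\mathbb{Q}(t)$ and let $v=v_t\circ v_5$ be the $t$-adic valuation composed with the $5$-adic valuation on the residue field $\mathbb{Q}$, so that $vK=\mathbb{Z}\times\mathbb{Z}$ ordered lexicographically. Put $a=it+\sqrt{6}\,t^2$ with $i=\sqrt{-1}$. Since $-1$ and $6$ are squares in $\mathbb{Q}_5$, the prime $5$ splits completely in $\mathbb{Q}(i,\sqrt{6})$, the decomposition group of $(\mathbb{Q}(i,\sqrt{6})(t)\,|\,K,v)$ is trivial, and hence $a\in K^h\setminus K$ with $[K(a):K]=4$. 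For every $c\in K$ the coefficient of $t$ in the $t$-expansion of $a-c$ is $i-c_1$ with $c_1\in\mathbb{Q}$, hence nonzero, so $v(a-c)\le(1,m)$ for some $m\in\mathbb{Z}$; thus $v(a-K)$ is cofinal with the coset $(1,0)+(\{0\}\times\mathbb{Z})$ and never reaches $(2,0)$. But $a$ has the conjugate $it-\sqrt{6}\,t^2$ at distance $(2,0)$, so $v(f'(a))=(4,0)>v(a-K)$, and a short computation gives $v(f(c))=(4,2m)$ while $2v(f'(c))=(6,2m)$ for the best possible $c\in K$: Hensel's condition is never satisfied within $K$, and the Newton iteration on $f$ cannot start. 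Note moreover that the correct cofinal coset here is $(1,0)+\Delta$, which has nothing to do with $\delta=v(f'(a))=(4,0)$, so Phases 2 and 3 are aimed at the wrong target. (Phase 3 also has an independent flaw: pseudo-limits of a pseudo-Cauchy sequence are never unique --- any element sufficiently close to one pseudo-limit is another --- so a pseudo-limit in $K$ need not be a root of the associated polynomial; that part could, however, be repaired by the convexity/doubling argument of Lemma~\ref{hedd}.)

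The paper avoids this trap in both of its proofs. In the second proof (Section~\ref{sectbuild}), Newton iteration is indeed the engine (Lemma~\ref{hedd}), but it is only ever applied to polynomials satisfying Hensel's hypothesis over the current field of a transfinite tower $K=K_0\subseteq K_1\subseteq\cdots$ whose union is $K^h$; weak distinguishedness is then transported down to $K$ by Lemma~\ref{dd}, whose whole point is that $v(z-K)$ may equal $v(x-K)$ for some $x$ in an intermediate field with $v(z-x)>v(z-K)$ --- in the example above, $x=it$. The first proof (Section~\ref{sectdeih}) instead inducts on the number of extensions of $v$ to a finite normal extension, using the Strong Approximation Theorem when the extensions are independent and passing to coarsenings otherwise. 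One of these mechanisms (a tower with a transfer lemma, or an induction over coarsenings) is indispensable; flattening the construction to a single Newton iteration over $K$ on the minimal polynomial of $a$ is not possible.
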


Note that if $(K,v)$ is of rank 1, that is, has archimedean ordered
value group, then its henselization lies in its completion and every
element $a\notin K$ of the henselization $K^h$ is distinguished over
$K$.

We will give two proofs for Theorem~\ref{MT1}. The first one is an
adaptation of the proof found in [Z--S] for the fact that the
henselization of a valued field is an immediate extension. The second
proof uses the fact that the henselization can be constructed as a union
of finite extensions generated by roots of polynomials that satisfy the
conditions of Hensel's Lemma.

By ``$\alpha>v(a-K)$'' we mean $\alpha>v(a-c)$ for all $c\in K$. We use
Theorem~\ref{MT1} to prove the following result:

\begin{theorem}                             \label{MT2}
Take $z\in \tilde{K}\setminus K$ such that
\[v(z-a)\;>\;v(a-K)\]
for some $a\in K^h$. Then $K^h$ and $K(z)$ are not linearly
disjoint over $K$, that is,
\[[K^h(z):K^h] < [K(z):K]\]
and in particular, $K(z)|K$ is not purely inseparable.
\end{theorem}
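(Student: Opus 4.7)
The plan is to invoke Theorem~\ref{MT1} to get weak distinguishedness of $a$, to extract Krasner-type bounds on the $K$-conjugates of $a$ using the Hensel characterization of $K^h$, and then to deduce non-disjointness via a Galois/resultant argument inside $K^h$.

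First I would observe that $v(z-a)>v(a-K)$ forces $a\notin K$, so $a\in K^h\setminus K$, and Theorem~\ref{MT1} supplies a nontrivial convex subgroup $\Delta\subseteq vK$ and $\alpha\in vK$ with $\alpha+\Delta$ cofinal in $v(a-K)$. Let $g\in K[X]$ be the minimal polynomial of $a$, with roots $a=a_1,\ldots,a_m$ in $\tilde K$, all distinct since $K^h|K$ is separable. Because $a$ is a Hensel root of $g$, there exists $c\in K$ with $v(g(c))>2v(g'(c))$, equivalently $v(a-c)>v(g'(c))=v(g'(a))$; the uniqueness part of Hensel's lemma then forces $v(a-a_i)\leq v(g'(c))<v(a-c)$ for each $i\geq 2$. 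Since $v(a-c)\in v(a-K)$ and $v(z-a)>v(a-K)$ by hypothesis, I would obtain the Krasner-type inequalities $v(z-a)>v(a-a_i)$ for all $i\geq 2$. A short Taylor computation then gives $v(g(z))>2v(g'(z))$, so Hensel's lemma inside the henselian overfield $K(z)^h$ produces a unique root of $g$ close to $z$, which by uniqueness must be $a$ itself. Thus $a\in K(z)^h=K^h(z)$.

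To conclude I would argue by contradiction: suppose $K^h$ and $K(z)$ are linearly disjoint over $K$, so that the minimal polynomial $f$ of $z$ over $K$ stays irreducible over $K^h$. Then every $K$-conjugate $z_j$ of $z$ is of the form $\sigma_j(z)$ with $\sigma_j\in\mathrm{Gal}(\tilde K|K^h)$. Since $K^h$ is henselian the automorphism $\sigma_j$ preserves $v$, and since $a\in K^h$ it fixes $a$, so $v(z_j-a)=v(\sigma_j(z-a))=v(z-a)$ uniformly in $j$. Combining with the Krasner bounds above yields $v(g(z_j))=v(z-a)+v(g'(a))$ for every $j$, and hence
\[
v\bigl(\mathrm{Res}(g,f)\bigr)=\sum_{j=1}^{n}v(g(z_j))=n\bigl(v(z-a)+v(g'(a))\bigr)\in vK,
\]
forcing $n\cdot v(z-a)\in vK$. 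The hardest step will be to convert this $n$-divisibility into a contradiction with weak distinguishedness: I expect to need to choose $\Delta$ maximal with the cofinality property and pass to the coarsening of $v$ by $\Delta$, where one must rule out that $v(z-a)$, trapped in the divisible hull of $vK$ yet strictly above $\alpha+\Delta$, can actually occur. The secondary assertion that $K(z)|K$ is not purely inseparable will then be automatic, since a purely inseparable extension of $K$ is always linearly disjoint from the separable extension $K^h|K$.
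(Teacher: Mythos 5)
Your opening reduction ($a\notin K$, invoke Theorem~\ref{MT1}) matches the paper, but after that the argument has two genuine gaps. First, the Krasner-type bound rests on the claim that, because $a\in K^h$, there is $c\in K$ with $v(g(c))>2v(g'(c))$ whose Newton--Hensel root is $a$. That is not justified: the henselization is generated by a \emph{transfinite tower} of such roots over successively larger fields, and a general element $a\in K^h$ is a rational expression in these generators; it need not itself arise as a Hensel root of its minimal polynomial over $K$ from an approximation inside $K$. So the inequalities $v(a-a_i)<v(a-c)$ (equivalently $\dist(a,K)>\max_i v(a-a_i)$) are asserted without proof, and nothing in the paper up to this point supplies them. (Note also that the intermediate conclusion ``$a\in K(z)^h=K^h(z)$'' is vacuous, since $a\in K^h\subseteq K^h(z)$ from the outset.)

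Second, and more seriously, the endgame does not close. The resultant computation only yields $n\cdot v(z-a)\in vK$, which contradicts nothing: weak distinguishedness of $a$ says that some coset $\alpha+\Delta$ of a \emph{proper} convex subgroup is cofinal in $v(a-K)$, and $vK$ contains plenty of elements strictly above $\alpha+\Delta$; hence $v(z-a)$ may perfectly well lie in $vK$ itself. You flag this as ``the hardest step'' and leave it open, so the proof is incomplete precisely where the real content lies. The idea you are missing is the one the paper uses: from weak distinguishedness, Lemmas~\ref{aat} and~\ref{chardist} produce $d\in K^\times$ and a coarsening $v_\Gamma$ with $(da)v_\Gamma\in K^hv_\Gamma\setminus Kv_\Gamma$; since $v(dz-da)>\Gamma$, the residues $(dz)v_\Gamma$ and $(da)v_\Gamma$ coincide, so $dz$ has a residue generating a nontrivial separable subextension of $K^hv_\Gamma|Kv_\Gamma$. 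Applying Hensel's Lemma to $(K^h,v_\Gamma)$ (Lemma~\ref{l}) then factors the minimal polynomial of $dz$ over $K^h$, giving $[K^h(z):K^h]<[K(z):K]$ directly, with no need for conjugate estimates or resultants. Your closing remark that a purely inseparable extension is automatically linearly disjoint from $K^h|K$ is correct and is exactly how the last assertion follows.
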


Theorem~\ref{MT1} answers a question from Bernard Teissier.
Theorem~\ref{MT2} has a crucial application in [Ku3] to the
classification of Artin-Schreier extensions with non-trivial defect.
Theorems~\ref{MT1} and~\ref{MT2} were originally proved in [Ku1], but
the proofs given in Sections~\ref{sectbuild} and~\ref{sectpMT2} are
improved versions of the original proofs, using much less technical
machinery, and the proof of Theorem~\ref{MT1} given in
Section~\ref{sectdeih} is new.

%
%
\section{Some preliminaries}    \label{sectprel}
We will assume the reader to be familiar with the basic facts of
valuation theory, and we will often use them without further references.
We recommend [End], [Eng--P], [R], [W], [Z--S] and [Ku2] for the general
valuation theoretical background.

As we are working with valued fields $(K,v)$ of higher rank (that is,
with non-archimedean ordered value groups $vK$), we will use
convex subgroups $\Delta$ of $vK$ and the corresponding coarsenings of
$v$. The ordering of $vK$ induces an ordering on $vK/\Delta$: the
set of positive elements in the latter group is just the image under the
canonical epimorphism of the set $vK^+$ of positive elements in $vK$.
Hence, $\alpha\geq\beta$ implies $\alpha+\Delta\geq\beta+\Delta$.
More precisely, $\alpha+\Delta\geq\beta+\Delta$ holds if and only if
there is some $\gamma\in\Delta$ such that $\alpha+\gamma\geq\beta$. The
\bfind{coarsening} $v_\Delta$ of $v$ is the valuation whose valuation
ring is $\{c\in K\mid vc\in vK^+\cup\Delta\}$, which contains the
valuation ring ${\cal O}_v$ of $v$. The value group of $v_\Delta$ on $K$
is canonically isomorphic to $vK/\Delta$. We have that $vc\geq vd$
implies $v_\Delta c\geq v_\Delta d$ and in particular, $vc\geq 0$
implies $v_\Delta c\geq 0$.

The valuation $v$ also induces a valuation $\ovl{v}_\Delta$ on the
residue field $Kv_\Delta$ such that $v$ is (equivalent to) the
composition $v_\Delta\circ \ovl{v}_\Delta$ (in this paper, we will
identify equivalent valuations). If ${\cal O}_{v_\Delta}$ and
${\cal M}_{v_\Delta}$ denote the valuation ring and valuation ideal of
$v_\Delta$, then the valuation ring of $\ovl{v}_\Delta$ is the image of
${\cal O}_v$ under the canonical epimorphism ${\cal O}_{v_\Delta}
\rightarrow {\cal O}_{v_\Delta} /{\cal M}_{v_\Delta}=Kv_\Delta$. The
value group of $\ovl{v}_\Delta$ on $Kv_\Delta$ is canonically isomorphic
to $\Delta$ via
\begin{equation}                            \label{iso}
\ovl{v}_\Delta (a+{\cal M}_{v_\Delta}) \>\mapsto\> va
\mbox{ \ \ for } a\notin {\cal M}_{v_\Delta}\;.
\end{equation}

If $(L|K,v)$ is an arbitrary extension of valued fields, then the convex
hull $\Gamma$ of $\Delta$ in $vL$ is a convex subgroup of $vL$, and
$v_\Gamma$ is an extension of $v_\Delta$ from $K$ to $L$. If $vL/vK$ is
a torsion group (which is the case if $L|K$ is algebraic), then taking
convex hulls induces a bijective inclusion preserving mapping from the
chain of convex subgroups of $vK$ to the chain of convex subgroups of
$vL$, and $v_\Gamma$ is the unique coarsening of $v$ on $L$ which
extends $v_\Delta$.

\pars
We will need some facts from ramification theory.

\begin{lemma}                               \label{coardi}
Let $(N|K,v)$ be an arbitrary normal algebraic extension and $w$ a
coarsening of $v$ on $N$. Then
\begin{equation}
(N|K)^{d(w)}\subseteq (N|K)^{d(v)}\subseteq (N|K)^{i(v)}\subseteq
(N|K)^{i(w)}\;,
\end{equation}
where $(N|K)^{d(v)}$ and $(N|K)^{d(w)}$ denote the decomposition fields
of $(N|K,v)$ with respect to $v$ and $w$, respectively, and
$(N|K)^{i(v)}$ and $(N|K)^{i(w)}$ denote the inertia fields
of $(N|K,v)$ with respect to $v$ and $w$, respectively.
\end{lemma}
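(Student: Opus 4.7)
The plan is to translate the three inclusions of fixed fields into the corresponding reverse inclusions of the associated subgroups of $G:=\Aut(N|K)$:
\begin{equation*}
G^d(v)\subseteq G^d(w),\quad G^i(v)\subseteq G^d(v),\quad G^i(w)\subseteq G^i(v),
\end{equation*}
and verify each group-theoretic inclusion separately. The middle one holds by the very definition of the inertia group as the subgroup of $G^d(v)$ acting trivially on $Nv$, so only the two outer inclusions require work.

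For $G^d(v)\subseteq G^d(w)$, the key point is that every $\sigma\in G^d(v)$ in fact acts trivially on the value group $vN$. Indeed, $\sigma{\cal O}_v={\cal O}_v$ makes $v\circ\sigma$ a valuation on $N$ with the same valuation ring as $v$ and agreeing with $v$ on $K$; the induced order-preserving automorphism of $vN$ must then be the identity, because any element of $vN$ has a nonzero integer multiple in $vK$ and $vN$ is torsion-free. Since ${\cal O}_w$ is determined by the convex subgroup of $vN$ corresponding to $w$ (as recalled in Section~\ref{sectprel}), triviality of the action on $vN$ immediately yields $\sigma{\cal O}_w={\cal O}_w$, that is, $\sigma\in G^d(w)$.

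For $G^i(w)\subseteq G^i(v)$, I would take $\sigma\in G^i(w)$ and use the characterization that $\sigma x-x\in{\cal M}_w$ for every $x\in{\cal O}_w$. Since ${\cal O}_v\subseteq{\cal O}_w$ and ${\cal M}_w\subseteq{\cal M}_v$, applying this to $x\in{\cal O}_v$ gives both $\sigma x=x+(\sigma x-x)\in{\cal O}_v$ and $\sigma x-x\in{\cal M}_v$. The first (together with the analogous statement for $\sigma^{-1}$) places $\sigma$ in $G^d(v)$, while the second shows that $\sigma$ fixes $Nv$ pointwise. Hence $\sigma\in G^i(v)$.

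The main obstacle is conceptual rather than computational: one must be disciplined about the reversal of inclusions between subgroups and their fixed fields, and, in the possibly non-separable case, interpret the decomposition and inertia fields as the fixed fields of the corresponding subgroups of $\Aut(N|K)$. Once this is set up, all three inclusions follow from the standard compatibilities between $v$, $w$, and their associated valuation rings, ideals, value groups, and residue fields, using the correspondence between coarsenings of $v$ on $N$ and convex subgroups of $vN$.
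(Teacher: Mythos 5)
Your proof is correct and follows essentially the same route as the paper: both arguments pass to the reverse inclusions of the decomposition and inertia groups, derive $\sigma{\cal O}_w={\cal O}_w$ from $\sigma{\cal O}_v={\cal O}_v$ for the first inclusion, invoke the standard containment of the inertia group in the decomposition group for the middle one, and use ${\cal O}_v\subseteq{\cal O}_w$ together with ${\cal M}_w\subseteq{\cal M}_v$ for the third. The only difference is that you spell out a detail the paper leaves implicit in its one-line first step (``$v\circ\sigma=v$ implies $w\circ\sigma=w$''), namely that an element of the decomposition group acts trivially on $vN$ because $vN/vK$ is torsion and $vN$ is torsion-free, so that the convex subgroup defining $w$ is preserved.
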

\begin{proof}
For $\sigma\in\Gal(N|K)$, $v\circ \sigma= v$ implies $w\circ \sigma= w$.
Hence the decomposition group with respect to $v$ is contained in the
decomposition group with respect to $w$. This proves the first
inclusion. The second inclusion is well known from ramification theory
(cf.\ [Eng-P], p.~124). For $\sigma\in\Gal(N|K)$, if $w(x-\sigma x)>0$
for all $x$ such that $wx\geq 0$, then $v(x-\sigma x)>0$ for all $x$
such that $vx\geq 0$. Hence the inertia group with respect to $w$ is
contained in the inertia group with respect to $v$. This proves the
third inclusion.
\end{proof}

\begin{lemma}                               \label{ADE}
Let $(N|K,w)$ be a finite normal extension of valued fields with
decomposition field $Z$ and inertia field $T$. If $z\in T$ then there is
$c\in Z$ such that
\[
w(z-c)\>=\>\max w(z-Z)\>\in wZ\>\;.
\]
\end{lemma}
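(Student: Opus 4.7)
The plan is to exploit the structure of the inertia extension $T|Z$ via standard ramification theory (see [Eng--P], \S 5.3): $Z$ is henselian with respect to $w$, the extension $T|Z$ is finite Galois with $wT=wZ$, and $Tw|Zw$ is finite separable with $m:=[T:Z]=[Tw:Zw]$. The first step is to produce a good generator: pick $\bar{t}\in Tw$ with $Tw=Zw(\bar{t})$ by the primitive element theorem, and let $t\in \mathcal{O}_T$ be any preimage of $\bar{t}$ under the residue map. Since $Z(t)w\supseteq Zw(\bar{t})=Tw$ and $Z(t)\subseteq T$, the fundamental inequality forces $[Z(t):Z]\geq [Z(t)w:Zw]=m=[T:Z]$, so $T=Z(t)$ and $\{1,t,\ldots,t^{m-1}\}$ is a $Z$-basis of $T$.

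The key ingredient is then the orthogonality identity
\[
w\Bigl(\sum_{i=0}^{m-1} b_i t^i\Bigr) \;=\; \min_{0\leq i\leq m-1} w(b_i)\qquad\text{for all } b_0,\ldots,b_{m-1}\in Z.
\]
After dividing through by an element of $Z$ of value $\min_i w(b_i)$, one reduces to the case where all $b_i\in\mathcal{O}_Z$ and some $\bar{b}_j\neq 0$. The residue of the sum is then $\sum \bar{b}_i \bar{t}^i$, a nontrivial $Zw$-linear combination of the basis $\{1,\bar{t},\ldots,\bar{t}^{m-1}\}$ of $Tw$; hence it is nonzero, and the original sum has value $0$.

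Write $z=\sum_{i=0}^{m-1} a_i t^i$ with $a_i\in Z$. For any $c\in Z$, applying the identity to $z-c=(a_0-c)+\sum_{i\geq 1}a_i t^i$ gives
\[
w(z-c) \;=\; \min\bigl(w(a_0-c),\,\mu\bigr),\qquad \mu:=\min_{1\leq i\leq m-1} w(a_i).
\]
If $z\in Z$ then all $a_i$ with $i\geq 1$ vanish and $c=z$ works trivially. Otherwise $\mu\in wZ$; the choice $c=a_0$ yields $w(z-a_0)=\mu$, while for any other $c\in Z$ one has $w(z-c)\leq\mu$, so $c=a_0$ attains the maximum, which lies in $wZ$.

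The main obstacle is the first step, producing a generator $t$ of $T|Z$ whose powers are valuation-orthogonal in the sense of the identity above; once this is in place the conclusion is a direct application of the ultrametric inequality.
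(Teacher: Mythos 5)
Your proof is correct and follows essentially the same route as the paper: lift a residue basis of $Tw|Zw$ to a $Z$-basis of $T$ consisting of units, use the fact that $w$ of a $Z$-linear combination of such elements equals the minimum of the coefficient values, and take $c$ to be the constant coefficient. The only difference is that you insist on a power basis $1,t,\ldots,t^{m-1}$ via the primitive element theorem (using separability of $Tw|Zw$), whereas the paper lifts an arbitrary residue basis $b_1w=1,\ldots,b_nw$, which makes that preliminary step unnecessary.
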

\begin{proof}
From ramification theory we know that $n:=[T:Z]=[Tw:Zw]$. We choose
$b_1=1,\ldots,b_n\in T$ such that $wb_1=\ldots=wb_n=0$ and $b_1w,
\ldots,b_nw$ is a basis of $Tw|Zw$. Then $b_1,\ldots,b_n$ are
$Z$-linearly independent and thus form a basis of $T|Z$. Since
$b_1w,\ldots,b_nw$ are $Zw$-linearly independent, we have that
$w(c_1b_1+\ldots+c_nb_n)=\min_{1\leq i\leq n} w(c_ib_i)\leq\min_{2\leq
i\leq n}w(c_ib_i)=\min_{2\leq i\leq n}w(c_i)\in wZ$. Hence if
$z=c_1b_1+\ldots+ c_nb_n$ and we set $c=c_1b_1=c_1\in Z$, then $w(z-c)=
\min_{2\leq i\leq n}w(c_ib_i)=\max w(z-Z)$.
\end{proof}

%
%
\section{Properties of weakly distinguished elements} 
Throughout this section, let $(L|K,v)$ be an extension of valued fields.
The extension is immediate if and
only if for every $z\in L\setminus K$ and every $c\in K$ there is $c'\in
K$ such that $v(z-c')>v(z-c)$. This holds if $z$ is weakly distinguished
over $K$ since then, $v(z-K)$ has no maximal element (as a non-trivial
convex subgroup of $vK$ has no maximal element). This proves:

\begin{lemma}                               \label{distimm}
If every $z\in L\setminus K$ is distinguished over $K$, then $(L|K,v)$
is immediate.
\end{lemma}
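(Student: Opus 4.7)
The proof is essentially already contained in the paragraph preceding the lemma, so my plan is to formalize the two ingredients made explicit there and then combine them.

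The first ingredient is the characterization $(L|K,v)$ is immediate $\iff$ for every $z \in L\setminus K$ and every $c \in K$ there exists $c' \in K$ with $v(z-c') > v(z-c)$, i.e.\ $v(z-K)$ has no maximum. I would verify the forward direction by writing $v(z-c) = va$ with $a \in K$ (using $vL=vK$) and then writing the residue of $(z-c)/a$ in $Lv=Kv$ as the residue of some $b\in K$ with $vb=0$, so that $c' := c+ab$ increases the value. For the converse, given $\alpha \in vL$ choose $z\in L$ with $vz=\alpha$; if $\alpha \notin vK$ then $z \notin K$, and any $c'\in K$ with $v(z-c')>vz$ forces $vc'=vz=\alpha \in vK$, a contradiction. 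A parallel argument with residues handles $Lv=Kv$.

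The second ingredient is that any non-trivial convex subgroup $\Delta$ of a totally ordered abelian group has no maximal element: if $0<\delta\in\Delta$ then $2\delta\in\Delta$ and $2\delta>\delta$. Granted these two facts, the lemma follows immediately. Fix $z \in L\setminus K$ and $c \in K$, and set $\beta := v(z-c)$. By hypothesis $z$ is distinguished, so some non-trivial convex subgroup $\Delta \subseteq vK$ is cofinal in $v(z-K)$; in particular $\Delta\subseteq v(z-K)$ and some $\gamma \in \Delta$ satisfies $\beta \leq \gamma$. Applying the second ingredient to $\Delta$, pick $\gamma' \in \Delta$ with $\gamma' > \gamma$; then $\gamma' \in \Delta\subseteq v(z-K)$ furnishes $c'\in K$ with $v(z-c') = \gamma' > \beta = v(z-c)$. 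By the first ingredient, $(L|K,v)$ is immediate.

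There is no real obstacle; the only care needed is in spelling out the characterization of immediacy, which the author invokes in passing. In particular, the hypothesis of \emph{distinguished} (rather than merely weakly distinguished) is exactly what is needed, since the cofinality of $\Delta$ directly in $v(z-K)$ is what lets the doubling trick on $\Delta$ produce larger values in $v(z-K)$ itself.
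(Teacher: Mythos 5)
Your proof is correct and follows essentially the same route as the paper, which simply combines the characterization of immediacy (``for every $z\in L\setminus K$ and $c\in K$ there is $c'\in K$ with $v(z-c')>v(z-c)$'') with the observation that a non-trivial convex subgroup has no maximal element; you merely spell out the characterization that the paper invokes in passing. One small correction to your closing remark: the weaker hypothesis ``weakly distinguished'' would in fact also suffice (as the paper's preceding paragraph notes), since a coset $\alpha+\Delta$ that is cofinal in and contained in $v(z-K)$ likewise has no maximal element, forcing $v(z-K)$ to have none.
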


A subset $S$ of an ordered set $T$ is a \bfind{final segment} of $T$ if
$S\ni\beta<\gamma\in T$ implies $\gamma\in S$, and an \bfind{initial
segment} of $T$ if $S\ni\beta>\gamma\in T$ implies $\gamma\in S$.

\begin{lemma}                               \label{fsegm}
If $v(z-K)$ has no maximal element, then it is an initial segment of
$vK$.
\end{lemma}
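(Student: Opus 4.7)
The plan is to verify two things about the set $v(z-K)$: first that it actually sits inside $vK$ (so that calling it an initial segment of $vK$ makes sense), and second that it is downward closed in $vK$. Both will follow from routine applications of the ultrametric triangle inequality, exploiting the hypothesis that $v(z-K)$ has no maximal element.

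First I would note that since $v(z-K)$ has no maximum, $\infty\notin v(z-K)$, hence $z\notin K$ and $v(z-c)\in vL$ for every $c\in K$. To show that in fact $v(z-c)\in vK$, I would fix $c\in K$ and set $\beta=v(z-c)$. By the no-maximum hypothesis there exists $c_1\in K$ with $v(z-c_1)>\beta$. Then
\[
v(c-c_1)\>=\>v\bigl((c-z)+(z-c_1)\bigr)\>=\>\min\bigl(v(z-c),\,v(z-c_1)\bigr)\>=\>\beta,
\]
where the second equality uses that the two summands have distinct values. Hence $\beta\in vK$.

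Second, to show that $v(z-K)$ is an initial segment of $vK$, I would take any $\beta=v(z-c)\in v(z-K)$ together with an arbitrary $\gamma\in vK$ with $\gamma<\beta$, choose $d\in K$ with $vd=\gamma$, and compute
\[
v\bigl(z-(c+d)\bigr)\>=\>v\bigl((z-c)-d\bigr)\>=\>\min(\beta,\gamma)\>=\>\gamma,
\]
so that $\gamma\in v(z-K)$, as required.

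There is no real obstacle here; the only subtle point is that one must first observe that $v(z-K)\subseteq vK$, which is not given a priori (the definition only places the set in $vL\cup\{\infty\}$), and this is what uses the no-maximum hypothesis. Once that is in hand, the initial-segment property is immediate from strict ultrametricity.
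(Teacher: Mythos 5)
Your proof is correct and follows essentially the same route as the paper's: the no-maximum hypothesis yields a $c_1$ with $v(z-c_1)>v(z-c)$, forcing $v(z-c)=v(c-c_1)\in vK$, and then the downward closure is obtained by perturbing $c$ by an element $d$ of value $\gamma$. The only cosmetic difference is that you spell out the intermediate observation $\infty\notin v(z-K)$ explicitly, which the paper leaves implicit here (it appears later in Lemma~\ref{chardist}).
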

\begin{proof}
By our assumption, for every $c\in K$ there is $c'\in K$ such that
$v(z-c)< v(z-c')$, whence $v(z-c)=v(c'-c)\in vK$. This proves that
$v(z-K) \subseteq vK$. If $v(z-c)>\gamma\in vK$, then take $d\in K$ such
that $vd=\gamma$ to obtain that $\gamma=vd=\min\{v(z-c),vd\}=v(z-(c+d))
\in v(z-K)$. This proves that $v(z-K)$ is an initial segment of $vK$.
\end{proof}

If $z\in L$ is distinguished over $K$ with $\Delta$ cofinal in $v(z-K)$,
and if $\Gamma$ is the convex hull of $\Delta$ in $vL$, then for all
$c\in K$, $v(z-c)\geq 0$ implies $v_\Gamma(z-c)\geq 0$ (but the converse
is not true). On the other hand, there is no $c\in K$ such that
$v(x-c)>\Gamma$ because otherwise we would have $v(x-c)>\Delta$. Hence,
$v_\Gamma(z-c)\leq 0$, so $v(z-c)\geq 0$ implies $v_\Gamma(z-c)=0$. We
will denote by $Kv_\Delta^{c(\ovl{v}_\Delta)}$ the completion of
$Kv_\Delta$ with respect to $\ovl{v}_\Delta$.

\begin{lemma}                               \label{chardist}
Suppose that $\Delta$ is a non-trivial convex subgroup of $vK$ and
$\alpha\in vK$ such that $\alpha+\Delta$ is cofinal in $v(z-K)$ (so
that $z$ is weakly distinguished over $K$). Then $z\notin K$, $v(z-K)
\subseteq vK$, and $\alpha+\Delta$ is a final segment of $v(z-K)$.

If in addition $\alpha=0$ (so that $z$ is distinguished over $K$),
$\Gamma$ is the convex hull of $\Delta$ in $vL$, and
$v_\Gamma z=0$, then
\begin{equation}                            \label{comp-rf}
zv_\Gamma\in Kv_\Delta^{c(\ovl{v}_\Delta)} \setminus Kv_\Delta\;.
\end{equation}

Conversely, if there exists a decomposition $v=v_\Gamma\circ
\ovl{v}_\Gamma$ on $L$ such that (\ref{comp-rf}) holds, then $z$ is
distinguished over $K$ with $\Delta=\Gamma\cap vK$ cofinal in $v(z-K)$.
\end{lemma}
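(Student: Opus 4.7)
The plan is to verify the three clusters of claims in turn, relying on Lemma~\ref{fsegm} and the isomorphism (\ref{iso}) as the main tools.

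For the first cluster, I would first show $z\notin K$ by noting that otherwise $\infty=v(z-z)\in v(z-K)$ could not be dominated by any element of $\alpha+\Delta\subseteq vK$. Non-triviality of $\Delta$ rules out a maximum in $\alpha+\Delta$, and cofinality transfers this to $v(z-K)$, so Lemma~\ref{fsegm} yields $v(z-K)\subseteq vK$. Showing $\alpha+\Delta$ is a final segment of $v(z-K)$ then reduces to a convexity argument: any $\beta\in v(z-K)$ wedged between two elements of the coset $\alpha+\Delta$ (one supplied by cofinality above $\beta$, the other below) must itself lie in that coset.

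For the second cluster, where $\alpha=0$ and $v_\Gamma z=0$ with $\Gamma$ the convex hull of $\Delta$ in $vL$ (so $\Gamma\cap vK=\Delta$ and $v_\Gamma|_K=v_\Delta$), my approach for the non-membership $zv_\Gamma\notin Kv_\Delta$ is by contradiction: if $zv_\Gamma=cv_\Delta$ for some $c\in\mathcal{O}_{v_\Delta}$, then $v(z-c)>\Gamma\supseteq\Delta$, violating cofinality of $\Delta$ in $v(z-K)$. For membership in the completion, I would use the inclusion $\Delta\subseteq v(z-K)$ from the first cluster to produce, for each $\delta\in\Delta^+$, an element $c_\delta\in K$ with $v(z-c_\delta)=\delta\in\Gamma$; by (\ref{iso}) this translates into $\ovl{v}_\Gamma(zv_\Gamma-c_\delta v_\Delta)=\delta$, so the family $(c_\delta v_\Delta)$ is a Cauchy net in $Kv_\Delta$ converging to $zv_\Gamma$ in $Lv_\Gamma$, placing $zv_\Gamma$ inside $Kv_\Delta^{c(\ovl{v}_\Delta)}$ under the standard embedding of the completion into $Lv_\Gamma$.

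For the converse, set $\Delta:=\Gamma\cap vK$; this is a convex subgroup of $vK$, and it is non-trivial because a trivially valued field equals its own completion while $zv_\Gamma\notin Kv_\Delta$. For $\Delta\subseteq v(z-K)$, I would use the completion property to find $c\in\mathcal{O}_{v_\Delta}$ approximating $zv_\Gamma$ in $\ovl{v}_\Delta$ to precision exceeding any prescribed $\delta\in\Delta$, then shift by $d\in K$ with $vd=\delta$ so that $\ovl{v}_\Gamma(zv_\Gamma-(c+d)v_\Delta)=\delta$ by the ultrametric inequality, giving $v(z-c-d)=\delta$ via (\ref{iso}). For cofinality, fix $\beta=v(z-c)\in v(z-K)$: one cannot have $\beta>\Gamma$ (else $zv_\Gamma=cv_\Delta\in Kv_\Delta$); if $\beta<\Gamma$ any positive element of $\Delta$ dominates $\beta$; and if $\beta\in\Gamma$, then $(z-c)v_\Gamma=zv_\Gamma-cv_\Delta$ sits in the image of the completion in $Lv_\Gamma$, whose $\ovl{v}_\Gamma$-values lie in $\Delta$, forcing $\beta\in\Delta$ so that $\gamma:=\beta$ works.

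The main delicate point I anticipate is this last case of the converse: one must make precise the identification of $Kv_\Delta^{c(\ovl{v}_\Delta)}$ with a subfield of $Lv_\Gamma$ via convergent Cauchy sequences, and verify compatibility of valuations so that $\ovl{v}_\Gamma$-values on the image of the completion automatically lie in $\Delta$ rather than roaming freely in $\Gamma$.
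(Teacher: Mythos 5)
Your proof is correct and follows essentially the same route as the paper's: Lemma~\ref{fsegm} plus convexity of the coset for the first cluster, and the isomorphism (\ref{iso}) together with the approximation characterization of the completion for the other two. Your treatment of the converse is in fact slightly more careful than the paper's, since you separately handle the case $v(z-c)\in\Gamma\setminus\Delta$ (relevant when $\Delta=\Gamma\cap vK$ is not cofinal in $\Gamma$) and explicitly justify that $\Delta$ is non-trivial.
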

\begin{proof}
Suppose that $\Delta$ is a non-trivial convex subgroup of $vK$ and
$\alpha\in vK$ such that $\alpha+\Delta$ is cofinal in $v(z-K)$. Then
$v(z-K)$ has no maximal element, and Lemma~\ref{fsegm} shows that
$v(z-K) \subseteq vK$. In particular, $\infty\notin v(z-K)$, which shows
that $z\notin K$. Since $\Delta$ and hence also $\alpha+\Delta$ is
convex in $vK$, the assumption that $\alpha+\Delta$ is cofinal in
$v(z-K) \subseteq vK$ implies that it is a final segment of $v(z-K)$.

\pars
Now suppose that $\Delta$ is cofinal in (and hence a final segment of)
$v(z-K)$, $\Gamma$ is the convex hull of $\Delta$ in $vL$, and $v_\Gamma
z=0$. Via the isomorphism (\ref{iso}),
let us identify this value group with $\Delta$. Take any $\delta\in
\Delta$. Then we can choose $d\in K$ such that $v(z-d)\in \Delta$ and
$\ovl{v}_\Gamma ((z-d)v_\Gamma)=v(z-d)>\delta$. This yields that
$\ovl{v}_\Gamma (zv_\Gamma - dv_\Delta)>\delta$. Consequently,
$zv_\Gamma\in Kv_\Delta^{c(\ovl{v}_\Delta)}$. On the other hand, if
$zv_\Gamma$ would lie in $Kv_\Delta$ and thus would equal $dv_\Delta$
for some $d\in K$, then we would have that $v_\Gamma(z-d)>0$ and hence
$v(z-d)>\Delta$, a contradiction.

For the converse, let $\Delta$ be any convex subgroup of $vK$ and
$\Gamma$ its convex hull in $vL$, and assume that (\ref{comp-rf}) holds.
Then for every $\delta\in \Delta$  there is $d\in K$
such that $\ovl{v}_\Gamma (zv_\Gamma - dv_\Gamma)=\delta$, that is,
$v(z-d)=\delta$. This shows that $\Delta\subseteq v(z-K)$. But there
is no $d\in K$ such that $v(z-d)>\Delta$ since otherwise,
$zv_\Gamma-dv_\Gamma=(z-d)v_\Gamma=0$ and consequently, $zv_\Gamma\in
Kv_\Delta$.
\end{proof}

\begin{lemma}                               \label{wdcomp}
Take any coarsening $w$ of $v$ on $L$. If $z\in L$ is weakly
distinguished over $K$ with respect to $w$, then also with respect to
$v$.
\end{lemma}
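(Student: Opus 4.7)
The plan is to transport the hypothesis from $w$ down to $v$ via the obvious candidates. Write $w=v_\Gamma$ for a convex subgroup $\Gamma$ of $vL$, and set $\Delta_0:=\Gamma\cap vK$, so that $wK$ is canonically identified with $vK/\Delta_0$ and $w(z-c)$ corresponds to $v(z-c)+\Delta_0$. The hypothesis furnishes a non-trivial convex subgroup $\Delta_w$ of $wK$ and an element $\alpha_w\in wK$ such that $\alpha_w+\Delta_w$ is cofinal in $w(z-K)$. I would take $\Delta\subseteq vK$ to be the preimage of $\Delta_w$ under the canonical epimorphism $vK\to vK/\Delta_0$, and $\alpha\in vK$ to be any lift of $\alpha_w$. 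Then $\Delta$ is a convex subgroup of $vK$, it contains $\Delta_0$, and it is non-trivial (if $w=v$ then $\Delta=\Delta_w$; otherwise $\Delta\supseteq\Delta_0\neq 0$); moreover the image of $\alpha+\Delta$ in $wK$ is exactly $\alpha_w+\Delta_w$.

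For the cofinality half --- that every $\beta\in v(z-K)$ is dominated by some element of $\alpha+\Delta$ --- I would write $\beta=v(z-c)$, push to $w$, and apply the hypothesis to obtain $\delta_w\in\Delta_w$ with $\beta+\Delta_0\leq\alpha_w+\delta_w$. Picking a lift $\delta\in\Delta$ of $\delta_w$ and unwinding the definition of the quotient ordering on $vK/\Delta_0$ produces some $\delta_0\in\Delta_0$ with $\beta\leq\alpha+\delta+\delta_0$; since $\delta+\delta_0\in\Delta$, this is the required bound.

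The harder half is to show $\alpha+\Delta\subseteq v(z-K)$. For a given $\delta\in\Delta$, I would exploit the non-triviality of $\Delta_w$ to choose $\delta_w'\in\Delta_w$ strictly larger than the coset $\delta+\Delta_0$; the hypothesis then yields some $c'\in K$ with $w(z-c')\geq\alpha_w+\delta_w'>\alpha_w+(\delta+\Delta_0)$. Unwinding the strict inequality in $vK/\Delta_0$ --- and using that $\Delta_0$ is convex, so any element of $vK$ that is both bounded below by an element of $\Delta_0$ and outside $\Delta_0$ must lie strictly above $\Delta_0$ --- upgrades this to the genuine inequality $v(z-c')>\alpha+\delta$ in $vK$. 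To close, I observe that because $\Delta_w$ is non-trivial $w(z-K)$ has no maximal element, and this property pulls back to $v(z-K)$; Lemma~\ref{fsegm} then says $v(z-K)$ is an initial segment of $vK$, and the strict inequality just established forces $\alpha+\delta\in v(z-K)$.

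I expect the main obstacle to be exactly this last step: converting a strict inequality modulo $\Delta_0$ into a strict inequality in $vK$, and then converting a strict upper bound into actual membership in $v(z-K)$. The first point is where the choice of $\Delta$ as the \emph{full} preimage of $\Delta_w$ (so that $\Delta_0\subseteq\Delta$) is essential; the second point is precisely the use of Lemma~\ref{fsegm}.
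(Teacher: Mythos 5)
Your proof is correct and follows essentially the same route as the paper's: lift $\ovl\alpha$ arbitrarily, take $\Delta$ to be the full preimage of the given convex subgroup of $wK$ under $vK\to vK/(\Gamma_w\cap vK)$, and verify the two halves of cofinality by pushing down to $w$ and pulling strict coset inequalities back up via convexity. The only difference is that you make explicit the final step (no maximal element pulls back, so Lemma~\ref{fsegm} gives the initial-segment property and hence membership of $\alpha+\delta$ in $v(z-K)$), which the paper leaves implicit.
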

\begin{proof}
Denote by $\Gamma_w$ the convex subgroup of $vL$ associated with the
coarsening $w$, such that $wL=vL/\Gamma_w$. Then $\Delta_w=\Gamma_w\cap
vK$ is the convex subgroup of $vK$ associated with the restriction of
$w$ to $K$, and $wK=vK/\Delta_w$. Further, denote by $\ovl{\Delta}$ the
convex subgroup and by $\ovl{\alpha}$ the element of $wK$ such that
$\ovl{\alpha}+\ovl{\Delta}$ is cofinal in $w(z-K)$. Choose $\alpha\in
vK$ such that $\alpha+\Delta_w=\ovl{\alpha}$. Set $\Delta=\{\delta\in
vK\mid \delta+\Delta_w\in\ovl{\Delta}$; this is a convex subgroup of
$vK$.

We show that $\alpha+\Delta$ is cofinal in $v(z-K)$. Take any
$c\in K$. By assumption there is $\ovl{\delta}\in \ovl{\Delta}$ such
that $\ovl{\alpha}+\ovl{\delta}> w(z-c)=v(z-c)+\Gamma_w$. Take
$\delta\in \Delta$ such that $\delta+\Delta=\ovl{\delta}$; then
$\alpha+\delta>v(z-c)$. On the other hand, for every $\delta\in \Delta$
we can take $\beta\in \Delta$ and some $c\in K$ such that
$\ovl{\alpha}+\ovl{\delta}< \ovl{\alpha}+\ovl{\beta}\leq w(z-c)$. This
implies that $\alpha+\delta<v(z-c)$. This completes our proof.
\end{proof}

We leave the easy proof of the following lemma to the reader.

\begin{lemma}                                      \label{aat}
Take $z\in L$ and $b,c\in K$, $b\ne 0$. Then
\[
v(bz+c-K) \>=\> vb+v(z-K)\;.
\]
Consequently,
\sn
1) \ $bz+c$ is weakly distinguished over $K$ if and only if $z$ is,
\sn
2) \ if $z$ is distinguished over $K$, then $bz+c$ is weakly
distinguished over $K$,
\sn
3) \ if $z$ is weakly distinguished over $K$, then there is some $d\in
K$ such that $dz$ is distinguished over $K$.
\end{lemma}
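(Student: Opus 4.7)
The plan is to first establish the set-theoretic identity $v(bz+c-K) = vb + v(z-K)$ by a direct manipulation, and then to deduce the three numbered consequences as formal rearrangements of the ``cofinality of a coset'' condition under this translation.

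For the identity, for any $c' \in K$ I would write $bz+c-c' = b\bigl(z - b^{-1}(c'-c)\bigr)$, so that $v(bz+c-c') = vb + v(z-b^{-1}(c'-c))$. Since $b\ne 0$, the map $c' \mapsto b^{-1}(c'-c)$ is a bijection of $K$ onto itself, so letting $c'$ range over $K$ yields $v(bz+c-K) = vb + v(z-K)$, as required.

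With the identity in hand, the set $v(bz+c-K)$ is merely the translate of $v(z-K)$ by the element $vb \in vK$. Translation by an element of $vK$ sends cosets of a convex subgroup $\Delta\le vK$ to cosets of the same $\Delta$, and clearly preserves the property of being cofinal in the translated set. Hence a coset $\alpha+\Delta$ of a non-trivial convex subgroup is cofinal in $v(bz+c-K)$ if and only if $(\alpha-vb)+\Delta$ is cofinal in $v(z-K)$. This gives (1) at once. For (2), if $\Delta$ itself (i.e.\ $\alpha=0$) is cofinal in $v(z-K)$, then $vb+\Delta$ is cofinal in $v(bz+c-K)$, witnessing that $bz+c$ is weakly distinguished over $K$ with the coset determined by $\alpha = vb$.

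For (3), suppose $\alpha+\Delta$ is cofinal in $v(z-K)$ for some non-trivial convex $\Delta\le vK$ and some $\alpha\in vK$. Choose $d\in K$ with $vd = -\alpha$, which is possible since $-\alpha\in vK$. Applying the identity with $b=d$ and $c=0$ gives $v(dz-K) = -\alpha + v(z-K)$, and so $\Delta = -\alpha + (\alpha+\Delta)$ is cofinal in $v(dz-K)$, meaning that $dz$ is distinguished over $K$. There is no serious obstacle; the only point that needs to be noticed is that the definition of weakly distinguished requires $\alpha\in vK$, which is precisely what allows us to produce the element $d$ used in part (3).
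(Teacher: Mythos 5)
Your proof is correct and is exactly the straightforward argument the paper has in mind (it explicitly leaves this proof to the reader as easy): the substitution $bz+c-c'=b\bigl(z-b^{-1}(c'-c)\bigr)$ together with the bijectivity of $c'\mapsto b^{-1}(c'-c)$ gives the set identity, and the three consequences follow because translation by an element of $vK$ preserves cosets of a convex subgroup and cofinality. No gaps.
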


\begin{lemma}                          \label{dd}
Let $(L|K,v)$ and $(L(z)|L,v)$ be arbitrary extensions of valued fields.
Assume that every element $x\in L\setminus K$ is weakly distinguished
over $K$. If $z$ is weakly distinguished over $L$, then also over $K$.
\end{lemma}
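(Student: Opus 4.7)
The plan is first to deduce from the hypothesis that $(L|K,v)$ is immediate, allowing me to identify $vL$ with $vK$, and then to split into two cases depending on whether some $l\in L\setminus K$ approximates $z$ much better than $K$ approximates $l$. For the immediacy step: since every $x\in L\setminus K$ is weakly distinguished over $K$, $v(x-K)$ has no maximal element (a non-trivial convex subgroup has none), so by the characterization of immediate extensions recalled at the start of Section~3, $(L|K,v)$ is immediate and in particular $vL=vK$, which I henceforth identify. Then $\Delta_L$ is a non-trivial convex subgroup of $vK$ and $\alpha_L\in vK$, with $\alpha_L+\Delta_L$ cofinal in $v(z-L)$. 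For each $l\in L\setminus K$, Lemma~\ref{chardist} supplies a cofinal final segment $\alpha_l+\Delta_l\subseteq v(l-K)$ with $\Delta_l$ a non-trivial convex subgroup of $vK$.

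The case analysis goes as follows. \emph{Case I:} there is $l_0\in L\setminus K$ with $v(z-l_0)$ strictly larger than every element of $\alpha_{l_0}+\Delta_{l_0}$. Then every $v(l_0-c)\in v(l_0-K)$ is bounded above by some element of $\alpha_{l_0}+\Delta_{l_0}$ and is therefore strictly less than $v(z-l_0)$; the triangle equality gives $v(z-c)=v(l_0-c)$ for all $c\in K$, so $v(z-K)=v(l_0-K)$, and the segment $\alpha_{l_0}+\Delta_{l_0}$ witnesses that $z$ is weakly distinguished over $K$. \emph{Case II:} for every $l\in L\setminus K$, $v(z-l)<\alpha_l+\delta$ for some $\delta\in\Delta_l$ (this is the negation of Case I, using that the segment has no maximum). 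Since $\alpha_l+\Delta_l\subseteq v(l-K)$, I pick $c\in K$ with $v(l-c)=\alpha_l+\delta>v(z-l)$, whence $v(z-c)=v(z-l)$ and $v(z-l)\in v(z-K)$. Combined with the trivial case $l\in K$, this yields $v(z-L)=v(z-K)$, so $\alpha_L+\Delta_L$ is a cofinal final segment of $v(z-K)$ witnessing weak distinguishedness of $z$ over $K$.

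The main obstacle I expect is spotting the right dichotomy: in Case I the convex subgroup witnessing weak distinguishedness over $K$ is $\Delta_{l_0}$, which may be strictly smaller than $\Delta_L$, whereas in Case II the original witness $\Delta_L$ descends intact. Verifying that these two cases are exhaustive and that each really produces a valid witness is the crux of the argument.
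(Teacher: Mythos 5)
Your proof is correct and follows essentially the same route as the paper: both arguments use the immediacy of $(L|K,v)$ to identify $vL$ with $vK$ and then split into the two cases $v(z-K)=v(z-L)$ (your Case II) versus the existence of some $l_0\in L\setminus K$ approximating $z$ strictly better than $K$ does, in which case $v(z-K)=v(l_0-K)$ and the witness for $l_0$ transfers to $z$ (your Case I). Your dichotomy, though phrased via the cofinal cosets $\alpha_l+\Delta_l$, is equivalent to the paper's, and your write-up is if anything more explicit about why the two cases are exhaustive.
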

\begin{proof}
From Lemma~\ref{distimm} we know that $vL=vK$. We have that
\[v(z-K) \>\subseteq\>v(z-L)\;.\]
Thus, if ``$\,=$'' holds, we are done. So we assume that ``$\,<$''
holds. Then there exists an element $x\in L$ such that $v(z-c)<v(z-x)$
for every $c\in K$, whence $v(z-c)=v(x-c)$. This shows that
\[v(z-K) \>=\> v(x-K)\;.\]
Since $x$ is weakly distinguished over $K$ by hypothesis, this
shows that also $z$ is weakly distinguished over $K$.
\end{proof}

%
%
\section{Distinguished elements in henselizations}  \label{sectdeih}
Our goal in this section is to show that every element in the
henselization $K^h$ is weakly distinguished over $K$. For valuations of
rank 1, this is a direct consequence of the well known fact that the
completion of a valued field of rank 1 contains its henselization.
Indeed, all elements in this completion and hence also all elements in
the henselization that do not lie in $K$ are distinguished over $K$.

If $(K,v)$ is of rank $>1$, then the extensions of $v$ to a given
algebraic extension field $L$ may not be independent. In this case, the
Strong Approximation Theorem may fail. As a substitute, for the proof
that the henselization is an immediate extension, Ribenboim
[R] gives a generalized version of the Strong Approximation
Theorem where the independence condition is replaced by conditions on
the given data that have to be satisfied by the requested element. But
in our context, the method of Zariski and Samuel \fvklit{ZA--SA2} is
more natural: it proceeds by induction on the number of extensions of
the valuation $v$ and treats dependent extensions by an investigation of
suitable coarsenings of $v$. We adapt this method to prove the
more informative Theorem~\ref{MT1}.

\begin{lemma}                               \label{distindep}
Take a normal separable-algebraic extension $(N|K,v)$ of a valued
fields and assume that all extensions of $v$ from $K$ to $N$ are
independent. Further, assume that $a\in N$ has the property that $v\ne
v\circ \sigma$ on $N$ for every $\sigma\in \Gal(N|K)$ such that $\sigma
a\ne a$. Then $a$ lies in the completion of $(K,v)$.
\end{lemma}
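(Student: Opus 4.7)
The plan is to show that $a$ can be approximated arbitrarily well by elements of $K$ in the valuation $v$, by combining the observation that $a$ lies in the decomposition field with the Strong Approximation Theorem and the trace map. Read contrapositively, the hypothesis says exactly that $H \subseteq \mathrm{Stab}(a)$, where $H := \{\sigma \in \Gal(N|K) : v\circ\sigma = v\}$ is the decomposition group of $v$ on $N$; hence $a \in Z := N^H$, the decomposition field. Since $a$ is algebraic over $K$, one reduces to the case where $N|K$ is finite Galois (the condition $a \in Z$ transfers to the normal closure $N_0$ of $K(a)$ because $\Gal(N|N_0)$ acts transitively on extensions of $v|_{N_0}$ to $N$, so every element of the decomposition group on $N_0$ lifts into $H$). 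List the distinct extensions of $v|_K$ to $N$ as $v = v_1, \ldots, v_n$.

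Because $N|Z$ is finite Galois, the trace $\mathrm{Tr}_{N|Z} : N \to Z$ is surjective; choose $b_0 \in N$ with $\mathrm{Tr}_{N|Z}(b_0) = 1$. For any $\gamma \in vK$, the pairwise independence of $v_1, \ldots, v_n$ permits applying the Strong Approximation Theorem on $N$ to produce $b \in N$ with $v_1(b - b_0)$ and each $v_i(b)$ for $i \geq 2$ exceeding a threshold depending on $\gamma$ and the finitely many values $v_i(a)$. Set $c := \mathrm{Tr}_{N|K}(ab) \in K$. Writing $\Gal(N|K) = \bigsqcup_{i=1}^n \tau_i H$ with $\tau_1 = 1$ and using $ha = a$ for $h \in H$, one expands
\[
c \;=\; \sum_{i=1}^n \tau_i a \cdot \tau_i \mathrm{Tr}_{N|Z}(b).
\]
The $i = 1$ term is $a(1 + \epsilon)$ with $v_1(\epsilon) \geq v_1(b - b_0)$ large. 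For $i > 1$, the composition $v_1 \circ \tau_i$ equals some $v_{j(i)} \neq v_1$ (because $\tau_i \notin H$), and for every $h \in H$ the further composition $v_{j(i)} \circ h$ is again some $v_k$ with $k \neq 1$ (otherwise $\tau_i h \in H$, forcing $\tau_i \in H$); hence $v_1(\tau_i \mathrm{Tr}_{N|Z}(b)) \geq \min_{k \neq 1} v_k(b)$ is large, and adding the bounded quantity $v_{j(i)}(a)$ from $\tau_i a$ keeps the $i$-th summand small in $v_1$. Assembling the bounds gives $v(c - a) > \gamma$, proving $a$ lies in the completion of $(K,v)$.

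The main obstacle is the bookkeeping in the last step: one must verify that neither $v_1 \circ \tau_i$ (for $i > 1$) nor its further composition with any $h \in H$ ever returns to $v_1$. This rests on the characterization of $H$ as the stabilizer of $v_1$ under the Galois action, together with the choice of $\tau_i$ as left coset representatives of $H$ in $\Gal(N|K)$, so that $\tau_i \in H$ only when $i = 1$.
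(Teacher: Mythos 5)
Your argument is correct and rests on the same pillars as the paper's proof: reduce to the finite normal closure $N_0$ of $K(a)$ (where the hypothesis on $a$ descends and places $a$ in the decomposition field $Z$), apply the Strong Approximation Theorem to the finitely many independent extensions $v_1,\ldots,v_n$, and descend to $K$ by a trace. Where you genuinely diverge is in how the trace is set up. The paper chooses $b$ with $v(a-b)\geq\alpha$ and $(v\circ\sigma)b\geq\alpha$ whenever $\sigma a\neq a$, and puts $c=\mathrm{Tr}_{N_0|K}(b)$; its concluding estimate tacitly needs $v(\sigma b)\geq\alpha$ for \emph{every} $\sigma\neq\mathrm{id}$, which is not available for the nonidentity elements of the decomposition group: these fix $a$ by hypothesis, and for them $\sigma b$ approximates $a$ rather than $0$, so that trace really approximates $[N_0:Z]\cdot a$ --- harmless only when that degree is invertible in $K$. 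Your normalization --- tracing $ab$ against an element $b_0$ with $\mathrm{Tr}_{N_0|Z}(b_0)=1$, together with the verification that $v_1\circ\tau_i h=v_1$ forces $\tau_i\in H$ --- absorbs exactly this multiplicity and makes the estimate close without restriction, at the price of the coset bookkeeping you describe; so your route is a more robust execution of the same idea. The only small point to make explicit (as the paper does) is that independence of the extensions also passes from $N$ down to $N_0$, which is what licenses the Strong Approximation step there.
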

\begin{proof}
Given any $\alpha\in vK$, we have to show that there exists $c\in K$
such that $v(a-c)\geq\alpha$. All extensions of $v$ from $K$ to $N$
are conjugate, that is, of the form $v\circ \sigma$ with $\sigma\in
\Gal(N|K)$ (cf.\ [Eng--P], Theorem~3.2.15). As we assume that all of
them are independent, the same will be true for the finitely many
extensions of $v$ from $K$ to the normal hull $N_a\subseteq N$ of
$K(a)|K$. Moreover, $\sigma a\ne a$ implies that $v\ne v\circ \sigma$
already holds on $N_a$ because otherwise, $v$ and $v\circ \sigma$ are
both extensions of $v=v\circ \sigma$ from $N_a$ to $N$ and hence there
is $\tau\in\Gal(N|N_a)$ such that $v=v\circ \sigma\circ\tau$ on $N$;
but as $\sigma\circ\tau (a)=\sigma a\ne a$, this is a contradiction to
our assumption on $a$.

We use the Strong Approximation Theorem
(cf.\ [Eng--P], Theorem~2.4.1) to find $b\in N_a$ such that
$v(a-b)\geq\alpha$ and $v(\sigma b)=(v\circ \sigma) b\geq\alpha$
whenever $\sigma a\ne a$. Writing $c=\sum_{\sigma}^{} \sigma b$ for the
trace Tr$_{N_a|K}(b)$, we find that
\[
v(a-c)\;\geq\;\min\{v(a-b),v\sigma b\mid\sigma\ne\mbox{\rm id}\}
\;=\;\alpha\;.
\]
\end{proof}

\begin{lemma}                               \label{indec}
The assumption on the element $a$ in Lemma~\ref{distindep} is satisfied
when $a$ lies in the decomposition field $Z$ of $(N|K,v)$.
\end{lemma}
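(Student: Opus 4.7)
The plan is to unwind the definitions. By definition, the decomposition field $Z = (N|K)^{d(v)}$ is the fixed field of the decomposition group
\[
G^{d}(v) \;=\; \{\sigma \in \Gal(N|K) \mid v\circ\sigma = v \text{ on } N\},
\]
so we have the Galois correspondence $Z = N^{G^d(v)}$ and hence $\Gal(N|Z) = G^d(v)$.

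Now suppose $a \in Z$ and let $\sigma \in \Gal(N|K)$ satisfy $\sigma a \ne a$. Then $\sigma$ does not fix $Z$ pointwise, so $\sigma \notin \Gal(N|Z) = G^d(v)$. By the very definition of the decomposition group, this means $v \circ \sigma \ne v$ on $N$. This is precisely the hypothesis on $a$ required by Lemma~\ref{distindep}, so the lemma is established.

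There is no real obstacle here; the statement is essentially a direct translation of the Galois-theoretic definition of $Z$. The only thing worth flagging is that we are using the normal (not merely separable-algebraic) setup so that the Galois correspondence applies and $Z$ is indeed the fixed field of $G^d(v)$ as above; this is consistent with the ambient hypothesis in Lemma~\ref{distindep} that $(N|K,v)$ is normal separable-algebraic.
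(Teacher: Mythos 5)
Your proof is correct and follows exactly the same route as the paper's: from $a\in Z$ and $\sigma a\ne a$ one gets $\sigma\notin\Gal(N|Z)$, which is the decomposition group, hence $v\circ\sigma\ne v$ on $N$. Nothing to add.
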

\begin{proof}
If $\sigma a\ne a$ then $\sigma\notin\Gal(N|Z)$. As the latter is the
decomposition group of $(N|K,v)$, this shows that $v\ne v\circ \sigma$
on $N$.
\end{proof}

Take a valued field $(K,v)$ and extend $v$ to the separable-algebraic
closure $K\sep$ of $K$. The henselization $K^h$ of $(K,v)$ is the
decomposition field of $(K\sep|K,v)$ (cf.\ [Eng--P], Theorem~5.2.2).
From the two preceding lemmata, we obtain:

\begin{corollary}                           \label{distindepcor}
If $(N|K,v)$ normal separable-algebraic extension of valued fields and
all extensions of $v$ from $K$ to $N$ are independent, then the
decomposition field of $(N|K,v)$ is contained in the completion of
$(K,v)$. If all extensions of $v$ from $K$ to $K\sep$ are independent
(which in particular is the case if the rank of $(K,v)$ is 1), then
$K^h$ is contained in the completion of $(K,v)$.
\end{corollary}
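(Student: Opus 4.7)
The plan is simply to chain the two preceding lemmata. For the first assertion, take any $a$ in the decomposition field $Z$ of $(N|K,v)$. Lemma~\ref{indec} tells us that $a$ satisfies the hypothesis of Lemma~\ref{distindep}, namely that $v\ne v\circ\sigma$ on $N$ for every $\sigma\in\Gal(N|K)$ with $\sigma a\ne a$. Since all extensions of $v$ from $K$ to $N$ are independent by assumption, Lemma~\ref{distindep} delivers $a$ in the completion of $(K,v)$. As $a\in Z$ was arbitrary, this gives $Z$ inside the completion of $(K,v)$, establishing the first claim.

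For the second assertion, recall that by [Eng--P], Theorem~5.2.2, the henselization $K^h$ is the decomposition field of $(K\sep|K,v)$. If all extensions of $v$ from $K$ to $K\sep$ are independent, then applying the first assertion with $N=K\sep$ yields $K^h$ inside the completion of $(K,v)$. Note that the proof of Lemma~\ref{distindep} reduces to the finite normal hull $N_a$ of $K(a)|K$, so the infiniteness of $K\sep$ causes no trouble; we only need that the finitely many extensions of $v$ from $K$ to each such $N_a$ inherit independence from those on $K\sep$, which is immediate since every extension to $N_a$ is obtained by restriction from some extension to $K\sep$, and a common proper coarsening on $N_a$ would, by taking convex hulls in the value groups, yield a common proper coarsening on $K\sep$.

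Finally, the parenthetical remark about rank 1 is standard: two distinct extensions $w_1\ne w_2$ of $v$ to $K\sep$ have restrictions to $K$ both equal to $v$. Since $K\sep|K$ is algebraic, each value group $w_iK\sep$ has $vK$ of finite index modulo torsion, hence the $w_i$ themselves are of rank $1$. But rank-$1$ valuations admit no nontrivial proper coarsenings, so $w_1$ and $w_2$ can share no nontrivial common coarsening and are therefore independent. There is essentially no obstacle in the corollary beyond invoking the two lemmata; the only point worth flagging is the brief verification that independence descends to normal subextensions, as just sketched.
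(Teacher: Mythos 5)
Your proof is correct and follows exactly the route the paper intends: the paper derives this corollary directly by chaining Lemma~\ref{indec} with Lemma~\ref{distindep}, applied to $N=K\sep$ for the second assertion. The extra verifications you supply (independence descending to the finite normal hulls $N_a$, and the rank-1 case) are sound and merely make explicit what the paper leaves implicit, since Lemma~\ref{distindep} already performs the reduction to $N_a$ in its own proof.
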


Now we are ready for the
\sn
{\bf Proof of Theorem~\ref{MT1}}:\n
Since $K^h$ is the decomposition field of $(K\sep|K,v)$, it follows that
for every normal separable-algebraic extension $(N|K,v)$, the
decomposition field is $K^h\cap N$ (cf.\ [End], (15.6) c)\,). Hence
$K^h$ is the union over the decomposition fields of all finite normal
separable-algebraic extensions of $(K,v)$. Thus we may assume that $a$
lies in the decomposition field $(Z,v)$ of some finite Galois
extension $(N|K,v)$. Let $v_1=v,v_2,\ldots,v_n$ be
all extensions of $v$ from $K$ to $N$. (Note that $n\geq 2$ because
the assumption $a\notin K$ implies that $Z\ne K$.)

If $n\geq 3$, then suppose that the lemma is already proved for the case
where the number
of extensions of the valuation $v$ from $K$ to $N$ is smaller than $n$.
In view of Corollary~\ref{distindepcor}, we only have to treat the case
where the extensions $v_1,\ldots,v_n$ are not independent on $N$. Hence,
there are $i,j$ such that $v_i$ and $v_j$ admit a nontrivial common
coarsening. The restriction of this coarsening to $K$ is also a
nontrivial coarsening of the valuation $v$ on $K$. (Indeed, as $N|K$ is
algebraic, restriction induces an inclusion preserving bijection between
the coarsenings of $v_i$ and the coarsenings of $v$ on $K$ which
preserves inclusion between the corresponding valuation rings.)
Among all the coarsenings of $v$ on $K$ that we find in this way,
running through all common coarsenings of all possible pairs $v_i$ and
$v_j$, let $w$ be the finest one. (Its valuation ring is the
intersection of the valuation rings of all of these coarsenings.) We
write $v=w\circ \ovl{w}$. Now $w$ admits an extension (also called $w$)
to $N$ which is a coarsening of at least two of the $v_i$'s.

W.l.o.g., we may assume that $w$ is also a coarsening of $v_1=v$.
Indeed, since all extensions of $v$ from $K$ to $N$ are conjugate, we
may choose $\sigma\in \Gal(N|K)$ such that $v_i\circ\sigma=v_1$, and we
obtain that $w\circ\sigma$ is an extension of $w$ from $K$ to $N$ and a
coarsening of $v_i\circ\sigma=v_1$ and of $v_j\circ\sigma\not=v_1$.

For the coarsening $w$ of $v$, we may infer from Lemma~\ref{coardi},
using the notation of that lemma:
\[(N|K)^{d(w)}\subset (N|K)^{d(v)}\subset (N|K)^{i(v)}\subset
(N|K)^{i(w)}\;.\]
We set $L=(N|K)^{d(w)}$; note that $Z=(N|K)^{d(v)}$.

Every extension of $w$ from $K$ to $N$ may be refined to an extension of
$v$ from $K$ to $N$ (just by composing it with any extension of
$\ovl{w}$ from $Kw$ to $Nw$). Since the extension $w$ gives already rise
to at least two extensions of $v$ from $K$ to $N$, we see that there
cannot be more than $n-1$ extensions of $w$ from $K$ to $N$. By our
induction hypothesis, we find that every element $a\in L\setminus K$ is
weakly distinguished over $K$ with respect to $w$, and by
Lemma~\ref{wdcomp}, also with respect to $v$. In view of Lemma~\ref{dd},
it now suffices to show that every element $z\in Z\setminus L$ is weakly
distinguished over $L$.

Since $Z$ is contained in $(N|K)^{i(w)}$, we may infer from
Lemma~\ref{ADE} the existence of an element $c\in L$ such that
$w(z-c)=\max w(z-L)\in wL$. We choose $b\in L$ such that $wb(z-c)=0$. By
Lemma~\ref{aat}, $z$ is weakly distinguished over $L$ if and only if
$b(z-c)$ is. Consequently, we may assume $c=0$, $b=1$ and
\[
0\>=\>wz\>=\> \max w(z-L)
\]
from the start.

After a suitable renumbering, we may assume that precisely the
extensions $v_1=v,v_2,\ldots,v_m$ of $v$ are composite with $w$, and we
may write $v_j=w\circ\ovl{w}_j$ for $1\leq j\leq m$. Now $(Z,v)$ is also
the decomposition field of $(N|L,v)$ (cf.\ [End], (15.6) b)\,). Since
$L$ was chosen to be the decomposition field of $(N|K,w)$, the extension
of $w$ from $L$ to $N$ is unique. Every $\tau\in \Gal (Nw|Lw)$ is
induced by some $\sigma\in \Gal(N|L)$. If $\tau (zw)\ne zw$, then
$\sigma z\ne z$, and by Lemma~\ref{indec}, $v\circ \sigma \not= v$ while
$w\circ \sigma = w$ on $N$. This implies that $\ovl{w}_1\circ \tau\not=
\ovl{w}_1$ on $Nw$.

Furthermore, by our choice of $w$, it is the finest coarsening of $v$ on
$K$ which is induced by a common coarsening of at least two $v_i$'s.
Consequently, the $\ovl{w}_i$'s must be independent since otherwise, a
common nontrivial coarsening of them could be composed with $w$ to
obtain a finer valuation, in contradiction to our choice of $w$. We have
thus shown that the extension $(Zw|Lw, \ovl{w_1})$ satisfies the
hypotheses of Lemma~\ref{distindep}. We conclude that $zw$ lies in the
completion of $(Lw,\ovl{w}_1)$. On the other hand, $\max w(z-L)=0$ shows
that there is no element $c\in L$ such that $w(z-c)>0$. This proves
$zw\notin Lw$. Hence by Lemma~\ref{chardist}, $z$ is distinguished over
$L$.

Now the second assertion of Theorem~\ref{MT1} follows from
Lemma~\ref{distimm}.                                         \QED

%
%
\section{Building up the henselization}     \label{sectbuild}
We will give a different approach to the proof of Theorem~\ref{MT1}. It
starts with the following observation:
\begin{lemma}                               \label{hedd}
Let $(K,v)$ be an arbitrary valued field and $f\in {\cal O}_v[X]$ be
non-linear, monic and irreducible over $K$. Assume that $a\in\tilde{K}$
is a root of $f$ such $av\in Kv$ and $vf'(a)=0$. Then $a$ is
distinguished over $K$.
\end{lemma}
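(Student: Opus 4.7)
The plan is to bootstrap Newton's iteration for $f$ near $a$, producing a sequence in $K$ whose distances to $a$ grow geometrically, and then to extract from the doubling behaviour of those distances the required non-trivial convex subgroup $\Delta$ of $vK$.

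First, I would pick, using $av\in Kv$, an element $c_0\in K$ with $c_0v=av$, so that $\gamma_0:=v(a-c_0)>0$; together with $vf'(a)=0$ this forces $vf'(c_0)=0$. Then I would set $c_{n+1}:=c_n-f(c_n)/f'(c_n)$ and expand $f(a)=\sum_{i\geq 0}g_i(c_n)(a-c_n)^i$, whose coefficient polynomials $g_i\in{\cal O}_v[X]$ are integer combinations of coefficients of $f$; the terms of index $i\geq 2$ contribute value at least $2v(a-c_n)$. This yields $v(f(c_n)+f'(c_n)(a-c_n))\geq 2v(a-c_n)$ and hence $v(a-c_{n+1})\geq 2v(a-c_n)$, while also $c_{n+1}v=c_nv=av$, so the hypotheses persist. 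Inductively $\gamma_n:=v(a-c_n)\geq 2^n\gamma_0$, so $(\gamma_n)$ is strictly increasing; thus $v(a-K)$ has no maximum and is an initial segment of $vK$ by Lemma~\ref{fsegm}. Since $\gamma_0>0$ lies in $v(a-K)$, this initial segment contains all of $vK^{\leq 0}$.

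The decisive step is to observe that $P:=v(a-K)\cap vK^+$ is closed under doubling: given $\beta=v(a-c)>0$ in $P$, I would restart Newton from $c$ (its hypotheses holding for the same reason) to obtain $c'\in K$ with $v(a-c')\geq 2\beta$, so $2\beta\in v(a-K)$ by the initial-segment property. From this I would define
\[
\Delta\>:=\>(-P)\cup\{0\}\cup P\,,
\]
which is manifestly convex and closed under negation. Closure under addition then follows from $\beta_1+\beta_2\leq 2\max(\beta_1,\beta_2)\in P$, which gives $\beta_1+\beta_2\in P$ by downward closure; mixed-sign cases reduce to the same initial-segment step. Hence $\Delta$ is a non-trivial convex subgroup of $vK$ containing $\gamma_0$, one has $\Delta\subseteq v(a-K)=vK^{\leq 0}\cup P$, and every $\beta\in v(a-K)$ is bounded above by $0\in\Delta$ (if $\beta\leq 0$) or by $\beta\in P\subseteq\Delta$ (if $\beta>0$); so $\Delta$ is cofinal in $v(a-K)$ and $a$ is distinguished over $K$.

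The main obstacle I anticipate is the doubling step. Without the uniform applicability of Newton at every starting point $c\in K$ with $v(a-c)>0$, the set $P$ would only be a downward-closed subset of $vK^+$ with no canonical group structure, and no obvious convex subgroup of $vK$ would present itself. Once the doubling is in hand, the remainder is bookkeeping around the initial-segment property already in Lemma~\ref{fsegm}.
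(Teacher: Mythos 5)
Your proposal is correct and follows essentially the same route as the paper's own proof: a Newton step $c\mapsto c-f(c)/f'(c)$ justified by the Taylor expansion over ${\cal O}_v$, yielding $v(a-c')\geq 2v(a-c)$, hence no maximal element in $v(a-K)$, and then the doubling property combined with the initial-segment property (Lemma~\ref{fsegm}) to show that the positive part of $v(a-K)$ is the positive cone of a non-trivial convex subgroup. Your write-up is in fact slightly more explicit than the paper's about assembling $\Delta=(-P)\cup\{0\}\cup P$ and checking cofinality, but the underlying argument is identical.
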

\begin{proof}
From the Taylor expansion we infer the existence of some
$\tilde{h}(X,Z)\in {\cal O}_v[X,Z]$ such that
\[
f(Z)-f(X)=f'(X)(Z-X)+(Z-X)^2\tilde{h}(X,Z)\;.
\]
Since $av\in Kv$, there is $c\in {\cal O}_v$ such that $v(a-c)>0$. Given
any such $c$, we note that $vf'(c)=vf'(a)=0$, and we set
\begin{equation}                            \label{refine}
c'\>:=\>c-\frac{f(c)}{f'(c)}\>\in\> {\cal O}_v\;.
\end{equation}
Then
\[
f(c')-f(c)\>=\>f'(c)(c'-c)+(c'-c)^2\tilde{h}(c',c)
\>=\>-f(c)+\frac{f(c)}{f'(c)}^2\tilde{h}(c',c)
\]
with $\tilde{h}(c',c)\in {\cal O}_v$, so that
\[
vf(c')\>=\>2vf(c)+v\tilde{h}(c',c)\>\geq\>2vf(c)\;.
\]
On the other hand,
\[
f(c)\>=\>f(c)-f(a)\>=\>f'(a)(c-a)+(c-x)^2\tilde{h}(c,a)\;.
\]
Since $vf'(a)=0$, $v(c-a)^2>v(c-a)$ and $v\tilde{h}(c,a)\geq 0$, it
follows that
\[
vf(c)\>=\>v(c-a)\>=\>v(a-c)\> >\>\;.
\]
Now (\ref{refine}) implies that $v(c'-c)>0$, whence $v(c'-a)>0$.
Replacing $c$ by $c'$ in the above argument, we may thus deduce that
\begin{equation}                            \label{x2}
v(a-c')\>=\>v(c'-a)\>=\>vf(c')\>\geq\>2cf(c)\>=\>2v(a-c)\;.
\end{equation}
First of all, this yields that $v(a-K)$ has no maximal element (note
that $\infty\notin v(a-K)$ as $a\notin K$ by our assumption on $f$).
Hence by Lemma~\ref{fsegm}, we know that the non-empty set of positive
elements in $v(a-K)$ is convex in $vK$. Therefore, (\ref{x2}) yields
that it is closed under addition and thus the set of positive elements
of a convex subgroup. This proves that $a$ is distinguished over $K$.
\end{proof}

We will consider a very special type of immediate extensions
$(K(z),v)|(K,v)$, and build up the henselization by a transfinite
repetition of such extensions. We call an element $z$ \bfind{strictly
distinguished} over $K$ if there exists a coarsening $w$ of $v$ such
that the following three conditions hold:

\begin{axiom}
\ax{(SD1)} $wz = 0\,$,
\ax{(SD2)} $zw\in Kw^{c(\ovl{w})}\setminus Kw\,$,
\ax{(SD3)} for all $n\in\N\,$, if $1,z,\ldots,z^n$ are linearly
independent over $K$, then\\ $1,zw,\ldots,(zw)^n$ are linearly
independent over $Kw\,$.
\end{axiom}
The third condition implies that $[K(z):K]=
[Kw(zw):Kw]$; in particular, if $z$ is transcendental over $K$, then
$zw$ is transcendental over $Kw$. Lemma~\ref{chardist} shows that if
$z$ is strictly distinguished over $K$, then $z$ is distinguished
over $K$.

The next lemma shows that strictly distinguished elements generate
extensions with a nice property. For $f\in {\cal O}_K[X]$, we denote by
$fv$ the polynomial obtained from $f$ by replacing the coefficients by
their $v$--residues.

\begin{lemma}                       \label{sd}
Let $z$ be strictly distinguished over $K$. Then every element $y\in
K(z)\setminus K$ is weakly distinguished over $K$.
\end{lemma}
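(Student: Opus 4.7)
The plan is to use the coarsening $w$ from the strictly distinguished hypothesis together with the converse direction of Lemma~\ref{chardist}. Let $\Gamma\subseteq vK(z)$ be the convex subgroup corresponding to $w$, and set $\Delta=\Gamma\cap vK$; condition (SD2) ensures $\Delta$ is non-trivial, because $Kw^{c(\ovl{w})}\neq Kw$ forces $\ovl{w}$ to be non-trivial.

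First I would verify that $(K(z)|K,v)$ is $w$-immediate, so that $wK(z)=wK$ and $K(z)w=Kw(zw)$. In the algebraic case, with $n=[K(z):K]$, condition (SD3) gives $[Kw(zw):Kw]=n$, and since $K(z)w\supseteq Kw(zw)$ the fundamental inequality forces $[wK(z):wK]=1$ and $[K(z)w:Kw]=n$. In the transcendental case, writing $y\in K(z)$ as $p(z)/q(z)$ with $p,q\in K[X]$ and using (SD3) to compute $wy=\min_i wp_i-\min_j wq_j\in wK$ and (after normalization) $yw=\tilde p(zw)/\tilde q(zw)\in Kw(zw)$ yields the same conclusion. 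Combined with (SD2), $K(z)w\subseteq Kw^{c(\ovl{w})}$.

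Given $y\in K(z)\setminus K$, choose $b\in K$ with $wb=wy$; by Lemma~\ref{aat}(1), $y$ is weakly distinguished iff $y/b$ is, so I reduce to $wy=0$. Then $yw\in K(z)w\subseteq Kw^{c(\ovl{w})}$. If $yw\notin Kw$, the converse of Lemma~\ref{chardist}, applied with the decomposition $v=v_\Gamma\circ\ovl{v}_\Gamma$ on $K(z)$, yields $y$ distinguished over $K$ with $\Delta$ cofinal in $v(y-K)$, hence weakly distinguished. If $yw\in Kw$, lift $yw$ to $e'\in K$ with $e'w=yw$ and set $y_1=y-e'$; then $wy_1>0$, and $y_1\in K(z)\setminus K$ (else $y=e'\in K$, contradicting $y\notin K$). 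By Lemma~\ref{aat} the problem reduces to showing $y_1$ weakly distinguished, so I would iterate.

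The main obstacle is the termination of this iteration. For algebraic $z$ of degree $n$, termination occurs after at most one additional step: writing $y-e'=\sum_{i\geq 1}y_iz^i$ and normalizing, the new residue is a nonzero polynomial in $zw$ of degree $\leq n-1$ with zero constant term, which cannot lie in $Kw$ because $[Kw(zw):Kw]=n$ by (SD3); thus the next iteration falls into the first case. The transcendental case admits an analogous analysis on the coprime form $y=p/q$, giving termination in finitely many steps. Should the iteration nevertheless fail to terminate, the resulting sequence $c_k\in K$ with $w(y-c_k)$ strictly increasing and unbounded in $wK$ would, together with Lemma~\ref{fsegm}, force $v(y-K)=vK$, whence $\alpha=0$ with $\Delta'=vK$ witnesses weak distinguishedness of $y$. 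In every case, $y$ is weakly distinguished over $K$.
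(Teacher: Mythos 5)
Your overall route --- normalize $wy=0$, use (SD3) to identify $K(z)w$ with $Kw(zw)\subseteq Kw^{c(\ovl{w})}$, and feed the residue into the converse direction of Lemma~\ref{chardist} --- is sound and is essentially the paper's strategy in residue--field language. But the gap sits exactly where you place it, and your treatment of it does not close it. Write $y=\sum_{i=0}^{n-1}c_iz^i$. The identity ``$y-e'=\sum_{i\geq 1}y_iz^i$'' is false for an \emph{arbitrary} lift $e'$ of $yw$: in general $y-e'$ still has the constant term $c_0-e'\neq 0$, and whenever $w(c_0-e')<\mu:=\min_{i\geq 1}wc_i$ the renormalized residue is again the residue of an element of $K$, so the next step lands back in the second case. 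Since by (SD1) and (SD3) one has $w(y-c)=\min\{w(c_0-c),\mu\}$ for every $c\in K$, with unlucky lifts the strictly increasing sequence $w(y-c_k)$ need not reach $\mu$ in finitely many steps (think of a dense value group). The fallback is also wrong on two counts: a strictly increasing sequence in an ordered group is not thereby unbounded, and in fact $w(y-K)$ is bounded above by $\mu$, so it is never cofinal in $wK$ when $w$ is non-trivial on $K$; the non-terminating branch cannot be rescued by declaring $v(y-K)=vK$. Finally, the transcendental case is only asserted, and there the same phenomenon occurs (the reduced numerator can again be a $Kw$-multiple of the reduced denominator).

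The repair is the one normalization that makes iteration unnecessary, and it is what the paper does. Since $y\notin K$, Lemma~\ref{aat} lets you subtract the \emph{exact} constant coefficient $c_0$ (not merely a residue lift) and then divide by a coefficient of minimal $w$-value, so that $y=f(z)$ with $f\in {\cal O}_{(K,w)}[X]\setminus{\cal M}_{(K,w)}[X]$ and $f(0)=0$; then $fw$ is a nonzero polynomial of degree $<n$ without constant term, so $(fw)(zw)\notin Kw$ by (SD3) and your first case applies at once. Equivalently, in your second case one always has $yw=c_0w$ (because $yw\in Kw$ forces $wc_i>0$ for all $i\geq 1$), so $c_0$ itself is an admissible choice of $e'$ and your ``one more step'' claim becomes correct --- but this choice has to be made and justified. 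In the transcendental case the analogous exact choice is $d$ with the coefficient of degree $m=\deg(gw)$ of $f-dg$ equal to $0$, which rules out $(f-dg)w$ being a nonzero $Kw$-multiple of $gw$; again a mere residue lift is not enough.
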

\begin{proof}
Let the decomposition $v=w\circ\ovl{w}$ be as in the above definition of
strictly distinguished elements. In the first step, we will prove the
lemma under the assumption that $y=f(z)$ with $f\in K[X]$ and $\deg f
<[K(z):K]$ if the latter is finite. (If $z$ is algebraic over $K$, then
this assumption is no loss of generality.) By Lemma~\ref{aat}, for every
$b\in K^{\times}$ and $c\in K$ we have that $y$ is weakly distinguished
over $K$ if and only if $by-c$ is; after picking suitable elements $b,c$
and replacing $f$ by $bf-c$ we may thus assume that $f$ has no constant
term and that $f\in {\cal O}_{(K,w)} [X] \setminus {\cal M}_{(K,w)}[X]$.
Consequently, $fw \not\equiv 0$, and since $wz=0$, we have $f(z)w =
(fw)(zw)$. By our assumption on the degree of $f$, the elements $1,z,
\ldots ,z^{\mbox{\scriptsize deg} f}$ are linearly independent over $K$,
and by condition (SD3), the same holds for the elements $1,zw,\ldots,
(zw)^{\mbox{\scriptsize deg} f}$ over $Kw$. Hence $(fw)(zw)\notin Kw$.
But since $zw$ is an element of the completion of $(Kw,\ovl{w})$, the
element $f(z)w = (fw)(zw)$ also lies in the completion of
$(Kw,\ovl{w})$. In view of Lemma~\ref{chardist}, this shows $f(z)$ to be
weakly distinguished over $K$.

In the second step, it remains to prove the lemma for the case where
$z$ is transcendental over $K$ and $y=f(z)/g(z)$ with $f,g\in K[X]$. By
a similar argument as above, after multiplication of $f$ and $g$ (and
hence of $y$) with suitable elements from $K^{\times}$, we may assume
that $f,g\in {\cal O}_{(K,w)}[X]\setminus {\cal M}_{(K,w)}[X]$. To avoid
the case where $(f(z)/g(z))w = (f(z)w)/(g(z)w)\in Kw$, we have to do the
following. If $m = \deg gw$, then the $m$-th coefficient of $g$ is not
zero; hence there exists an element $d\in K$ such that the $m$-th
coefficient of the polynomial $f-dg$ is 0. Again, after multiplication
of $f-dg$ with a suitable element from $K^{\times}$, we may assume that
$f-dg\in{\cal O}_{(K,w)}[X]\setminus {\cal M}_{(K,w)}[X]$. Then
\[
\frac{(f(z)-dg(z))w}{g(z)w}\notin Kw\;,
\]
but this element lies in the completion of $(Kw,\ovl{w})$ since the same
holds for $(f(z)-dg(z))w$ and $g(z)w$. Since $(f-dg)/g = (f/g) - d$,
it follows by Lemma~\ref{chardist} and Lemma~\ref{aat} that $y=f(z)/
g(z)$ is weakly distinguished over $K$.
\end{proof}

The next lemma shows how strictly distinguished elements appear in
henselizations.

\begin{lemma}                               \label{hensstr}
Let $(K,v)$ be an arbitrary valued field and $f\in {\cal O}_v[X]$ be
non-linear, monic and irreducible over $K$. Assume that $a\in\tilde{K}$
is a root of $f$ such that $av$ is an element of $Kv$ and a simple root
of $fv$. Then $a$ is distinguished over $K$. If in addition, for every
proper coarsening $w$ of $v$ either $fw$ remains irreducible over $Kw$
or admits a root in $Kw$ with $\ovl{w}$--residue $av$, then $a$ is
strictly distinguished over $K$.
\end{lemma}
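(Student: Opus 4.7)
The first assertion will follow immediately from Lemma~\ref{hedd}. Simplicity of $av$ as a root of $fv\in Kv[X]$ gives $(fv)'(av)\ne 0$, and since differentiation commutes with $v$-reduction and $av\in Kv$, this is exactly $f'(a)v\ne 0$, i.e., $vf'(a)=0$. Combined with $av\in Kv$, these are precisely the hypotheses of Lemma~\ref{hedd}, so $a$ is distinguished over $K$.

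For the second assertion, the natural witnessing coarsening is as follows. Let $\Delta$ be the non-trivial convex subgroup of $vK$ cofinal in $v(a-K)$ supplied by part one, let $\Gamma$ be its convex hull in $vK(a)$, and set $w=v_\Gamma$; this restricts to $v_\Delta$ on $K$, a proper coarsening of $v$ since $\Delta\ne\{0\}$. I would verify (SD1)--(SD3) in turn. Since $a$ is integral over ${\cal O}_v$, we have $va\ge 0$; if $va>0$ then $va\in v(a-K)$ is bounded above by some element of $\Delta$ and hence lies in $\Gamma$ by convexity, so $wa=0$ in either case. This gives (SD1). A direct invocation of Lemma~\ref{chardist} then yields $aw\in Kw^{c(\ovl{w})}\setminus Kw$, which is (SD2).

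The crux of the argument, and the step I expect to require the most care, is (SD3); this is where the additional hypothesis enters. Applied to the proper coarsening $w|_K=v_\Delta$, the hypothesis asserts that $fw$ is either irreducible over $Kw$ or has a root $b\in Kw$ with $b\ovl{w}=av$. In the first case, the minimal polynomial of $aw$ over $Kw$ must equal $fw$ (which is monic of degree $\deg f$ with $aw$ as a root), so $[Kw(aw):Kw]=\deg f=[K(a):K]$ and (SD3) follows. In the second case I would derive a contradiction: $aw$ is also a root of $fw$, and via the isomorphism (\ref{iso}) one computes $(aw)\ovl{w}=av$; but simplicity of $av$ as a root of $fv=(fw)\ovl{w}$ forbids two distinct roots of $fw$ from sharing the residue $av$, since otherwise $(X-av)^2$ would divide $fv$. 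Hence $aw=b\in Kw$, contradicting (SD2), so $fw$ must in fact be irreducible. The main obstacle is precisely this simple-root argument, keeping straight that the $\ovl{w}$-residues of roots of $fw$ really are roots of $fv$ with the correct multiplicity count; once it is in place, (SD3) is immediate.
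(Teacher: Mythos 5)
Your proposal is correct and follows essentially the same route as the paper: the first assertion via Lemma~\ref{hedd} (reformulating simplicity of $av$ as $vf'(a)=0$), and the second by taking $w=v_\Gamma$ for $\Gamma$ the convex hull of the subgroup $\Delta$ cofinal in $v(a-K)$, getting (SD1)--(SD2) from Lemma~\ref{chardist}, and ruling out the ``root in $Kw$'' alternative because $aw$ is the unique root of $fw$ with $\ovl{w}$--residue $av$ yet $aw\notin Kw$, so $fw$ is irreducible and the degree count gives (SD3). The only cosmetic difference is that you phrase the exclusion of the second alternative as an explicit contradiction, where the paper states it directly.
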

\begin{proof}
The first part of the lemma follows directly from Lemma~\ref{hedd} via
a reformulation of the condition on $a$. Now let $f$ satisfy the
hypothesis of the second part. By the first part of the lemma,
$a$ is distinguished over $K$. By virtue of $f\in {\cal O}_v[X]$
we have $va\geq 0$. An application of Lemma~\ref{chardist} thus
yields $v_\Gamma a= 0$ and $av_\Gamma\in Kv_\Delta^{c(\ovl{v}_\Delta)}
\setminus Kv_\Delta$, with $\Gamma$ and $\Delta$ as in that lemma. It
remains to show that $a$ also satisfies condition (SD3) for $w=
v_\Gamma$. Since $av$ is a simple root of $fv$, we know that $av_\Gamma$
is the only root of $fv_\Gamma$ with $\ovl{w}$--residue $av$. On the
other hand, $av_\Gamma\notin Kv_\Delta$, and our hypothesis now yields
that $fv_\Gamma$ is irreducible over $Kv_\Delta$. Since $f$ is monic, we
now have $[Kv_\Delta (av_\Gamma):Kv_\Delta]=\mbox{\rm deg} fv_\Gamma
=\mbox{\rm deg} f = [K(a):K]$ which yields (SD3) for $w=v_\Gamma$.
\end{proof}

The additional condition on the polynomial $f$ that we have introduced
in the above lemma is not too restrictive:
\begin{lemma}
The valued field $(K,v)$ is henselian if and only if it satisfies the
following ``weaker'' version of Hensel's Lemma:\n
Let $f\in {\cal O}_v[X]$ be monic and $a\in K$ such that $fv$
admits $av$ as a simple zero. Assume in addition that $fw$ admits a
root with $\ovl{w}$--residue $av$ for every proper coarsening $w$ of
$v$ for which $fw$ is reducible. Then $f$ admits a root in $K$ with
residue $av$.
\end{lemma}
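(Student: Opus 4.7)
The forward direction is immediate, since a henselian field satisfies the ordinary Hensel's Lemma in this form (every monic $f\in{\cal O}_v[X]$ whose residue $fv$ has a simple zero $av$ with $a\in K$ admits a root in $K$ with residue $av$), and the weaker version merely imposes an additional hypothesis on coarsenings. For the converse, assume the weaker Hensel's Lemma holds and suppose, for contradiction, that $(K,v)$ is not henselian. Then there exist a monic $f\in{\cal O}_v[X]$ and $a\in K$ such that $fv$ has $av$ as a simple zero but $f$ has no root in $K$ with residue $av$; choose such a counterexample $(f,a)$ with $\deg f$ minimal. By Gauss's Lemma for valuation rings, $f$ factors into monic irreducibles in ${\cal O}_v[X]$, and the simple zero $av$ of $fv$ lies in the residue of exactly one of these factors, which is itself a counterexample; by minimality we may assume $f$ is irreducible. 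A linear $f$ would contain the desired root, so $\deg f\ge 2$.

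Now apply the weaker Hensel's Lemma to $(f,a)$. Since $f$ has no root in $K$ with residue $av$, its additional hypothesis must fail: there is a proper coarsening $w$ of $v$ (write $v=w\circ\ovl{w}$) such that $fw$ is reducible over $Kw$ and yet admits no root in $Kw$ with $\ovl{w}$-residue $av$. Factor $fw=g_1\cdots g_m$ into monic irreducibles in ${\cal O}_{\ovl{w}}[X]$, with $m\ge 2$. Since $(fw)\ovl{w}=fv$ has $av$ as a simple zero, exactly one factor --- label it $g_1$ --- satisfies $g_1\ovl{w}(av)=0$ simply. By the choice of $w$, $g_1$ has no root in $Kw$ with $\ovl{w}$-residue $av$; so $\deg g_1\ge 2$, while $\deg g_1<\deg f$ because $m\ge 2$.

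Finally, lift $g_1$ to a monic $\tilde g_1\in{\cal O}_v[X]$ by lifting each coefficient along the surjection ${\cal O}_v\twoheadrightarrow{\cal O}_{\ovl{w}}$ induced by the $w$-residue map. Then $\tilde g_1v=g_1\ovl{w}$ still has $av$ as a simple zero, and any root $\gamma\in K$ of $\tilde g_1$ with $\gamma v=av$ would give $\gamma w\in Kw$ as a root of $g_1$ with $\ovl{w}$-residue $av$ --- impossible for an irreducible polynomial of degree at least two. Hence $(\tilde g_1,a)$ is a counterexample of strictly smaller degree, contradicting minimality, and $(K,v)$ must be henselian. The main obstacle is the strategic one: recognising that a minimal-degree counterexample, combined with the failure of the added hypothesis, yields a witnessing coarsening $w$ from which the extraction of the distinguished irreducible factor $g_1$ of $fw$ and its lifting back to ${\cal O}_v[X]$ produces a genuinely smaller counterexample. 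Once that skeleton is in place, every individual verification is routine valuation-theoretic bookkeeping.
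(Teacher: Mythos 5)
Your proof is correct. The core mechanism is the same as the paper's -- over a suitable coarsening $w$, extract the unique monic irreducible factor of $fw$ whose $\ovl{w}$--reduction has $av$ as a (simple) zero, observe it has degree $>1$ and $<\deg f$, and lift it back to ${\cal O}_v[X]$ -- but the two arguments are organized differently. The paper fixes one counterexample $g$ to ordinary Hensel's Lemma, minimizes the degree of such an irreducible factor over \emph{all} coarsenings of $v$ simultaneously, and lifts the minimal one to produce a single polynomial $f$ that violates the ``weaker'' version; this requires verifying the coarsening condition for the lifted $f$ at every coarsening at once (using that the coarsenings form a chain, so $fw$ is forced to be irreducible above $w_0$ and to divide $gw$ below it), a point the paper states rather tersely. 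You instead take a counterexample to \emph{ordinary} Hensel's Lemma of minimal degree, reduce to the irreducible case by Gauss's Lemma, invoke the ``weaker'' version to obtain one witnessing coarsening, and perform a single descent step to contradict minimality. This buys you a cleaner verification -- you only ever deal with one coarsening at a time -- at the cost of running the argument by contradiction rather than exhibiting an explicit counterexample to the modified hypothesis. Both routes are valid; yours is arguably easier to check in detail.
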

\begin{proof}
We have to show the above version implies the original version of
Hensel's Lemma (the one without the additional assumption). Assume that
$(K,v)$ is not henselian. Then there is some polynomial $g\in {\cal
O}_v[X]$ having no root in $K$, and $a\in K$ such that $gv$ admits
$av$ as a simple zero. Consider all coarsenings $w$ of $v$, such
that $gw$ admits a factor $g_w$, irreducible over $Kw$ and of degree
$>1$, and such that the $\ovl{w}$--reduction $g_w\ovl{w}$ admits $av$ as
a zero. Among these, we choose a coarsening $w_0$ for which $g_{w_0}$
has least degree. Furthermore, we choose any $f\in {\cal O}_v[X]$ with
$fw_0 = g_{w_0}$ and $\mbox{\rm deg} f = \mbox{\rm deg} g_{w_0}$. Then
$f$ satisfies the above condition: $fv$ admits $av$ as a simple zero,
and for every coarsening $w$ of $v$, the polynomial $fw$ is either
irreducible or admits a zero whose $\ovl{w}$--residue is equal to $av$.
But $f$ does not admit any root in $K$ since its $\ovl{w}$--reduction
$g_{w_0}$ is irreducible over $Kw$ and of degree $>1$. This shows that
$(K,v)$ does not satisfy the above version of Hensel's Lemma.
\end{proof}

The henselization $K^h$ can be generated over $K$ by a transfinitely
repeated adjunction of roots $x$ of polynomials which satisfy the
hypothesis of Hensel's Lemma. The foregoing lemma shows that this is
also true if we replace Hensel's Lemma by the above version. In this
case, in every step an element is adjoined which is strictly
distinguished over the previous field, according to Lemma~\ref{hensstr}.
The next lemma shows why we are choosing this procedure.

\begin{lemma}
Let $(M|K,v)$ be an extension of valued fields generated by a set of
elements $\{z_{\nu}\mid\nu < \tau\}\subset M$, where $\tau$ is an
ordinal number, such that for every $\nu<\tau$, the element $z_{\nu}$ is
strictly distinguished over $K_{\nu}:=K(z_{\mu}|\mu<\nu)$ (where
$K_0:=K$). Then every element $z\in M\setminus K$ is weakly
distinguished over $K$.
\end{lemma}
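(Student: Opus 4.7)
The plan is to proceed by transfinite induction on $\nu\leq\tau$, proving the statement: every element of $K_\nu\setminus K$ is weakly distinguished over $K$. Here we interpret the chain $K_\nu$ by setting $K_{\mu+1}=K_\mu(z_\mu)$ at successors and $K_\lambda=\bigcup_{\mu<\lambda}K_\mu$ at limits, so that $K_\tau=M$ (using that any $y\in M$ is a rational expression in finitely many $z_\nu$'s and hence lies in some $K_\mu$ with $\mu\leq\tau$). The base case $\nu=0$ is vacuous.

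For the successor step, assume the claim is known for $K_\mu$, and take $y\in K_{\mu+1}\setminus K=K_\mu(z_\mu)\setminus K$. If $y\in K_\mu\setminus K$, the inductive hypothesis applies directly. Otherwise $y\in K_\mu(z_\mu)\setminus K_\mu$, and since $z_\mu$ is strictly distinguished over $K_\mu$ by assumption, Lemma~\ref{sd} yields that $y$ is weakly distinguished over $K_\mu$. Now I invoke Lemma~\ref{dd} with the role of $L$ played by $K_\mu$ and the extension $K_\mu(y)|K_\mu$: the hypothesis of Lemma~\ref{dd} is exactly the inductive hypothesis (every element of $K_\mu\setminus K$ is weakly distinguished over $K$), and its conclusion is that $y$ is weakly distinguished over $K$.

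The limit step is immediate: if $\lambda\leq\tau$ is a limit ordinal and $z\in K_\lambda\setminus K$, then $z\in K_\mu\setminus K$ for some $\mu<\lambda$, so the inductive hypothesis at $\mu$ applies. Taking $\nu=\tau$ gives the lemma. I do not anticipate a genuine obstacle here: all of the technical content has already been packaged into Lemmas~\ref{sd} and~\ref{dd}, and the present lemma is essentially a bookkeeping argument showing that these two can be iterated through an arbitrary ordinal-indexed tower. The only point requiring a moment's care is to ensure that at each successor stage we can legitimately apply Lemma~\ref{dd} to an \emph{arbitrary} element $y$ of the extension (not merely to the generator $z_\mu$), but this is exactly what Lemma~\ref{dd} states when one takes $L=K_\mu$ and views $K_\mu(y)$ as a subfield of $K_\mu(z_\mu)$.
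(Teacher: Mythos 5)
Your proof is correct and follows essentially the same route as the paper: transfinite induction on the tower $K_\nu$, with Lemma~\ref{sd} handling the new element at each successor stage and Lemma~\ref{dd} transferring weak distinguishedness down to $K$, the limit case being trivial by taking unions. Your closing remark about applying Lemma~\ref{dd} to an arbitrary $y$ rather than just the generator is exactly how the paper uses that lemma, so there is nothing to add.
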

\begin{proof}
We prove the lemma by transfinite induction on $\rho<\tau$. The
assertion holds trivially for the field $K$. Now assume $\rho\geq 1$ and
that the assertion holds for every $K_{\mu}$ with $\mu<\rho$. If $\rho$
is a limit ordinal, then $K_{\rho} = \bigcup_{\mu<\rho} K_{\mu}$ showing
that the assertion holds for $K_{\rho}$ too. Now let $\rho=\nu+1$ be a
successor ordinal. Then $K_{\rho}=K_{\nu}(z_{\nu})$ where $z_{\nu}$ is
strictly distinguished over $K_{\nu}$. Let $y$ be an arbitrary element
of $K_{\nu} (z_{\nu})\setminus K_{\nu}$. By Lemma~\ref{sd}, $y$ is
weakly distinguished over $K_{\nu}$. By our induction hypothesis, every
element $x\in K_{\nu}\setminus K$ is weakly distinguished over $K$. In
view of Lemma~\ref{dd}, this yields that also $y$ is weakly
distinguished over $K$. Hence, the lemma holds for $K_{\rho}$, and the
induction step is established.
\end{proof}

This lemma and Lemma~\ref{hensstr} yield the following corollary, which
together with Lemma~\ref{distimm} again proves Theorem~\ref{MT1}:
\begin{corollary}
Let $K$ be a valued field. The henselization $K^h$ can be generated over
$K$ in the way as described in the hypothesis of the foregoing lemma.
Thus, every element in $K^h\setminus K$ is weakly distinguished over
$K$.
\end{corollary}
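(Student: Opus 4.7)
The plan is to combine three ingredients already established in the excerpt: (a) the standard transfinite construction of $K^h$ by iterated adjunction of Hensel-roots; (b) the unnumbered lemma preceding the corollary, which shows that the ``weaker'' version of Hensel's Lemma (the one with the extra coarsening hypothesis) already characterizes henselian fields; and (c) Lemma~\ref{hensstr}, which produces a strictly distinguished element from each polynomial satisfying that coarsening hypothesis. Together they realize $K^h$ as a tower $(K_\nu)_{\nu<\tau}$ to which the foregoing lemma applies.

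First I would recall the classical description: $K^h$ is obtained as a directed union $\bigcup_{\nu<\tau} K_\nu$, where $K_0=K$, where $K_{\nu+1}=K_\nu(z_\nu)$ for some root $z_\nu\in\tilde K$ of a non-linear monic $f_\nu\in{\cal O}_v[X]$ irreducible over $K_\nu$ and satisfying the hypothesis of Hensel's Lemma over $K_\nu$, and where at limit ordinals one takes the union of the previous fields. This is the very construction alluded to in the paragraph preceding the corollary statement.

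Second, I would invoke the unnumbered lemma that was just proved. It says that henselian fields are characterized by the ``weaker'' version of Hensel's Lemma in which $f$ is additionally required to satisfy, for every proper coarsening $w$ of $v$, that either $fw$ is irreducible over $K_\nu w$ or $fw$ admits a root in $K_\nu w$ whose $\ovl w$-residue equals $z_\nu v$. Hence the transfinite construction above can be arranged so that each $f_\nu$ satisfies this strengthened condition: if some step required a polynomial violating the coarsening condition, then by the lemma the containing henselization would not be henselian, a contradiction. This step is the only place where a small verification is needed, and it is really a matter of reading the unnumbered lemma in the right direction; I expect this to be the main conceptual obstacle.

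Finally, given that each $f_\nu$ satisfies the strengthened condition, Lemma~\ref{hensstr} tells us that $z_\nu$ is strictly distinguished over $K_\nu$. Thus the tower $(K_\nu)_{\nu<\tau}$ fulfils the hypothesis of the foregoing lemma, which immediately yields that every element of $K^h\setminus K$ is weakly distinguished over $K$. This is exactly the second assertion of the corollary, and together with the first it completes the proof. \QED
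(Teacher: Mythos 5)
Your proposal is correct and follows essentially the same route as the paper: it combines the transfinite adjunction of Hensel roots, the unnumbered lemma showing that polynomials satisfying the extra coarsening condition suffice to characterize henselianity, Lemma~\ref{hensstr} to see that each adjoined root is strictly distinguished, and then the foregoing lemma to conclude weak distinguishedness. The paper gives no separate proof beyond exactly this chain of references, so your argument matches it step for step.
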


%
%
\section{Proof of Theorem~\ref{MT2}}        \label{sectpMT2}
We need the following lemma. We assume that the valuation $v$ of a field
$K$ is extended to its algebraic closure $\tilde{K}$ and there has a
decomposition $v=w\circ \ovl{w}$.

\begin{lemma}                               \label{l}
If $(K(y)|K,v)$ is an algebraic extension and such that $wy=0$ and
$yw\in K^h w\setminus Kw$, then $K^h$ and $K(y)$ are not linearly
disjoint over $K$.
\end{lemma}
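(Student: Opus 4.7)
The plan is to show that the minimal polynomial $f$ of $y$ over $K$ factors nontrivially in $K^h[X]$; since this is equivalent to $[K^h(y):K^h] < [K(y):K]$, it yields the failure of linear disjointness.

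First I pick $a \in K^h$ with $wa = 0$ and $aw = yw$, which is possible because $yw \in K^h w$, so that $w(y-a) > 0$. Letting $y_1 = y, y_2, \ldots, y_n \in \tilde K$ denote the roots (with multiplicity) of $f$, I consider the shifted polynomial $f(X+a) \in K^h[X]$, whose roots are $y_j - a$. The central tool is the Newton polygon of $f(X+a)$ with respect to $w$: since $K^h$ is henselian for $v$ and hence for the coarsening $w$, the standard slope-factorization over henselian fields applies, and $f(X+a)$ splits in $K^h[X]$ into factors, one for each distinct value taken by $w(y_j - a)$. Consequently, if these valuations are not all equal, $f(X+a)$ (hence $f$) is reducible over $K^h$ and we are done.

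It remains to rule out the case that $w(y_j - a) = w(y-a) > 0$ for every $j$. In that case each $y_j$ is a $w$-unit with $y_j w = aw = yw$, so $f \in \mathcal O_w[X]$ and its reduction satisfies $fw(X) = (X - yw)^n$ in $Kw[X]$. If $\chara(Kw) \nmid n$, the coefficient of $X^{n-1}$ already forces $yw \in Kw$, contradicting the hypothesis. If $\chara(Kw) = p \mid n$, write $n = p^k m$ with $p \nmid m$; using $(X - yw)^n = (X^{p^k} - (yw)^{p^k})^m$ in characteristic $p$, the coefficient of $X^{p^k(m-1)}$ gives $(yw)^{p^k} \in Kw$, whence $yw$ is purely inseparable over $Kw$.

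To finish I invoke separability of $K^h w$ over $Kw$: by Lemma~\ref{coardi}, $K^h = (K\sep|K)^{d(v)}$ is contained in the inertia field $(K\sep|K)^{i(w)}$, whose $w$-residue is by standard ramification theory the separable closure of $Kw$ inside $K\sep w$. Hence $yw \in K^h w$ is separable over $Kw$; combined with pure inseparability this forces $yw \in Kw$, the desired contradiction. The main technical obstacle is to justify the Newton polygon factorization for a coarsening $w$ of possibly arbitrary rank, but henselianity of $(K^h, w)$ makes the usual iterative Hensel-lifting argument go through essentially unchanged.
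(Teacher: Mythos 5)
Your proof is correct, and it rests on the same two pillars as the paper's argument --- henselianity of $(K^h,w)$ and separability of $K^hw|Kw$ obtained from Lemma~\ref{coardi} via the inertia field --- but the execution differs in both halves. For the first half you do not actually need the full Newton polygon slope factorization (whose arbitrary-rank justification you flag as an obstacle): it suffices to note that, $(K^h,w)$ being henselian, $w$ extends uniquely from $K^h$ to $\tilde{K}$, so $w\circ\sigma=w$ for every $\sigma\in\Aut(\tilde{K}|K^h)$ and hence the roots of any irreducible factor of $f(X+a)$ over $K^h$ all have the same value $w(y_j-a)$; the paper makes the analogous reduction with $v$ in place of $w$ and without the shift by $a$ (``all conjugates of $y$ have the same value $vy$''). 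In the remaining case the two arguments genuinely diverge: the paper introduces the minimal polynomial $g$ of $yw$ over $Kw$, observes that $\deg g>1$, that $g$ divides $fw$ and that $yw$ is a simple root of $g$ by separability, and then lifts the resulting coprime factorization of $fw$ by Hensel's Lemma to produce an explicit nontrivial factorization of $f$ over $K^h$; you instead show that this case cannot occur at all, since $fw=(X-yw)^n$ with coefficients in $Kw$ forces $n\cdot yw$, respectively $m\cdot(yw)^{p^k}$, into $Kw$, making $yw$ purely inseparable over $Kw$ and contradicting its separability. Your route trades the coprime-factorization form of Hensel's Lemma for a short characteristic-$p$ coefficient computation and a contradiction; the paper's route is constructive and exhibits the shape of the factorization of $f$ over $K^h$. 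Both are complete; note that the paper also sketches a second, purely ramification-theoretic proof via $K^hw=Kw^{h(\ovl{w})}$ and the count of extensions of $\ovl{w}$ to $Kw(yw)$, which differs from both of these.
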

\begin{proof}
Since $K^h$ is henselian for the valuation $v$, it is
also henselian for the coarsening $w$ (because if $w$ would admit two
distinct extensions to the algebraic closure of $K$ then we could use
them to construct two distinct extensions of $v$).

Let $f(X) \in K[X]$ be the minimal polynomial of $y$ over $K$. Our
assertion is proved if we are able to show that $f$ is reducible over
$K^h$. At this point, we may assume that all conjugates of $y$ over $K$
have the same value $vy$ since otherwise, the inequality $[K^h(y):K^h] <
[K(y):K]$ is immediately seen to be true. This assumption yields $f\in
{\cal O}_w[X]$, and because $f$ is monic, its reduction $fw$ is
non-trivial. The minimal polynomial $g\in Kw[X]$ for $yw$ over $Kw$ has
degree $>1$ since $yw \notin Kw$. Furthermore, it must divide $fw$ which
satisfies $(fw)(yw) = f(y)w =0$. From Lemma~\ref{coardi}, we infer that
$K^h = (K\sep|K)^{d(v)}$ lies in $L:=(K\sep|K)^{i(w)}$. Since $Lw|Kw$ is
separable (cf.\ [Eng-P], Theorem~5.2.7.(1)\,), we find that $xw$ is a
simple root of $g$.

Applying Hensel's Lemma to the henselian field $(K^h,w)$, we conclude
that $f$ becomes reducible over $K^h$; indeed, $f$ factors into two
nontrivial polynomials where the roots of the first one all have
$w$--residue $yw$ while there exists at least one root (in $\tilde{K}$)
of the second polynomial which has as $w$--residue a root of $g$ (in
$\tilde{K}w$) which is different from $yw$. This proves our lemma.
\end{proof}

An alternative proof of this lemma reads as follows. Again, we use
$K^h w = Kw^{h(\ovl{w})}$. By the hypothesis of the lemma,
$(Kw(yw)|Kw,\ovl{w})$ is thus a nontrivial subextension of
$(Kw,\ovl{w})^h | (Kw,\ovl{w})$. By general ramification theory, it
admits at least two extensions of the valuation $\ovl{w}$ from $Kw$ to
$Kw(yw)$. Since these give rise to different extensions of the valuation
$v$ from $K$ to $K(y)$, it again follows from general ramification
theory that $K^h$ and $K(y)$ are not linearly disjoint over $K$.

\parb
With the help of this lemma and Theorem~\ref{MT1}, we are now able to
give the
\sn
{\bf Proof of Theorem~\ref{MT2}}:\n
Assume that $(K,v)$ is any valued field, $v$ is extended to $\tilde{K}$,
$z\in \tilde{K}\setminus K$ and $a\in K^h$ such that $v(z-a)>v(a-K)$.
Then $a\notin K$ since otherwise, $\infty\in v(a-K)$ and $v(z-a)>
\infty$, a contradiction.

Since $a\in K^h\setminus K$, Theorem~\ref{MT1} shows that $a$ is weakly
distinguished over $K$. By Lemma~\ref{aat}, there is $d\in K^\times$ and
a convex subgroup $\Gamma$ of $v\tilde{K}$ which is cofinal in $v(da-K)
=vd+v(a-K)$. By Lemma~\ref{chardist}, $(da)v_\Gamma\notin Kv_\Gamma$. As
$v(z-a)>v(a-K)$ implies that $v(dz-da)>vd+v(a-K)$, we find that
$v(dz-da)>\Gamma$. Thus, $(dz)v_\Gamma=(da)v_\Gamma \in K^h
v_\Gamma\setminus Kv_\Gamma$. Now Lemma~\ref{l} shows that $K^h$ and
$K(z)=K(dz)$ are not linearly disjoint over $K$.
\QED

\bn
\newcommand{\lit}[1]{\bibitem #1{#1}}

\end{document}